\newtheorem{thm}{Theorem}
\newtheorem{lem}{Lemma}
\newtheorem{exa}{Example} 
\newtheorem{rem}{Remark}
\newtheorem{cor}{Corollary}
\DeclareMathOperator\Aut{Aut}
\DeclareMathOperator\h{\mathfrak h}
\newcommand\Z{\mathbb Z}
\newcommand\Q{\mathbb Q}
\newcommand\R{\mathbb R}
\newcommand\C{\mathbb C}
\def\H{\mathcal H}
\newcommand\F{\mathbb F}            % Fundamental domain
\newcommand\J{\mathcal J}
\def\O{\mathcal O}
\newcommand\G{\Gamma}                % Fundamental group
\newcommand\e{\zeta}
\def\iso{\simeq}
\def\sig{\mbox{sig }}
\title{Minimal integral Weierstrass equations for genus 2 curves}
\author{L. Beshaj} 
\address{L. Beshaj \\
Department of Mathematics \\
The University of Texas at Austin\\
1 University Station C1200, Austin, Texas 78712}
\email{beshaj@math.utexas.edu }
\date{} % delete this line to display the current date
\begin{document}

\begin{abstract}
We study the minimal Weierstrass equations for genus 2 curves defined over a ring of integers $\O_\F$. This is done via reduction theory and Julia invariant of binary sextics. We show that when the binary sextics has extra automorphisms this is usually easier to compute. Moreover, we show that when the curve is given in the standard form $y^2=f(x^2)$, where $f(x)$ is a monic polynomial,  $f(0)=1$ which is defined over $\O_\F$ then this form is reduced.   
\end{abstract}

\maketitle

%**************************
\section{Introduction}
Let $\mathcal M_2$ be the moduli space classifying algebraic curves of genus 2.  
Using the classical theory of invariants of binary forms, J. Igusa (1960) constructed an arithmetic model of the moduli space $\mathcal M_2$. 
Given a moduli point ${\mathfrak p}\in \mathcal M_2 (\Q)$ there are basically two  main cases to get an equation of the curve defined over $\Q$  when  such an equation exists.  

If $\Aut ({\mathfrak p})$ has order 2, then one can use an algorithm of Mestre \cite{Me} to determine if there is a curve $\mathcal X$ defined over $\Q$ and construct its equation. 
 If $\Aut ({\mathfrak p})$ has order $ > 2$, then there always exists a curve $\mathcal X$ defined over $\Q$ and its equation can be found via the dihedral-invariants $(u, v)$ defined in  \cite{deg2}.  Such invariants determine uniquely the isomorphism class of a genus 2 curve with automorphism group isomorphic to the Klein 4-group $V_4$. 
The cases when the automorphism group of the curve is isomorphic to the dihedral group of order 8 or of order 12 correspond to the singular points of the $V_4$-locus in $\mathcal M_2$ and are treated differently; see \cite{beshaj-2} for such loci.  From  \cite{deg2}, and \cite{Sh3} we have a method which determines the equation of the curve when the curve has extra automorphisms. 
A more recent approach to recover an equation of a curve starting with any point ${\mathfrak p}\in \mathcal M_2$ regardless of the automorphism group can be found in the work of Malmendier and Shaska in \cite{malm-shaska}. In any case, for any number field $K$,  when a point ${\mathfrak p}\in \mathcal M_2 (K)$ is given we  can determine a genus 2 curve at worst defined over a quadratic extension of $K$. The reader interested in such computational routines can check \cite{alg-curves}.

Let us now assume that $\F$ is a number field and $\mathcal X$ be a genus 2 curve with extra automorphisms  defined over $\F$. Let $\O_\F$ be  the ring of integers  of $\F$.  Then, without any loss of generality we can assume that $\mathcal X$ is defined over $\O_\F$.        The height $\h (\mathcal X)$ of $\mathcal X$ over $\O_\F$ is defined in \cite{height-1}.  
We further assume that $F$ is a minimal field of definition  of  $\mathcal X$.
The focus of this paper is to find a twist $\mathcal X^\prime$ of $\mathcal X$ defined over $\O_\F$ such that the height $\h (\mathcal X^\prime)$ is minimal; see \cite{reduction} and \cite{height-1} for further details.
%The only  known method to find such twist with minimal height is   via the reduction theory described in \cite{reduction}.  

Reduction theory was introduced by Julia in \cite{julia} and has been revisited recently by \cite{cremona-red}, \cite{SC}, and \cite{reduction}. For every binary form there is a corresponding positive definite binary quadratic called the \textit{Julia quadratic}.  Since the Julia quadratic is positive definite, then there is a unique root in the upper half-plane $\H_2$. Hence, there is a map from the set of semistable binary forms to the upper half-plane, which is called the \textit{zero map} and denoted by $\e_0$. The binary form $f$ is called \textit{reduced} when $\e_0 (f)$ is in the fundamental domain of the modular group $\G :=SL_2 (\O_\F) / \{ \pm I \}$.  
For binary quadratics it is shown in \cite[Thm. 13]{reduction} that $f$ is reduced implies that $f$ has minimal height.  It is expected that this occurs under some mild condition in higher degrees as well. 

Hence, for any genus two curve $\mathcal X$ we write this curve in its Weierstrass form $y^2z^4=f(x, z)$over some algebraically closed field $k$, where $f(x, z)$ is a binary sextic. Finding a twist of $\mathcal X$ with smaller coefficients is equivalent to finding the reduction $\mbox{red } f$ as described in \cite{reduction}.  The main issue with this method is that determining $\mbox{red } f$ involves solving a system via Gr\"obener bases. There is also a numerical approach suggested in \cite{SC}, which of course it is open to numerical analysis. 

In this paper we investigate whether determining a Weierstrass equation with minimal height is easier in the case that the curve has extra involutions. Any curve with extra involutions can be written in  as $y^2 \,z^4=g(x^2, z^2)$, where $g(x, z)$ is a binary cubic form.  
We discover that if $g(x, z) \in \O_\F [x, z]$ and  $g(x^2, z^2)$ has minimal discriminant over $\O_\F$, then $y^2 \,z^4=g(x^2, z^2)$ is reduced.  

The paper is organized as follows. 
In section  two we describe briefly reduction theory of binary quintics and sextics, see \cite{beshaj-thesis} for more details.  The most delicate and difficult  part of reduction theory is computing the Julia quadratic. We show that computing the Julia quadratic for binary quintics and sextics in a direct way using the system given in ~ \cite[Eq.  4.13]{beshaj-thesis} is too difficult. Hence, we investigate alternative  methods,  considering separately totally real and totally complex binary forms. At the end of this section we give an example where we show how numerical computations can be used successfully in implementing a reduction algorithm.

In Section three we give a quick review of how for any binary form $f(x, z) \in \O_\F$ we can minimize the discriminant over $\O_\F$.  There is a detailed treatment of this in \cite{rachel}.  For minimizing the discriminant of genus 2 curves (over global fields) there is the more classical result of Liu \cite{Liu}. 
In section four we tailor the reduction for binary quintics and sextics and study how this can be performed when applied to forms with extra automorphisms. We show that the curves with extra automorphisms in the standard form as in $y^2 \,z^4=f(x, z)$ where $f(x, z) = x^6 +ax^4z^2+bx^2z^4+z^6$ are reduced over $\O_\F$ when defined over $\O_\F$ and have minimal discriminant.% over $\O_\F$.

In the last section we build a database of all such curves with height $\mathfrak h \leq 100$ defined over the integers.  There are 20 292 such curves (up to isomorphism over $\C$). For each height $1 \leq \mathfrak h \leq 100$ we also display the number of curves for that height.  The number of such curves with automorphism group $D_4$ and $D_6$ are also displayed. From these 20 292 curves we check if they are all of minimal height.  Of course not all of them are expected to have minimal discriminant.   We check how many of them have minimal height $\mathfrak h \leq 3$, while $\max \{ a, b \} > 3$.  We found 57 such cases, and as expected all of them do not have minimal discriminant over $\Z$.

%&&&&&&&&&&&&&&&&&&&&&&&&&&&&&&&&&&&&&&&&&&&&&&&&&&&&&&&&&&&&&&&&&&
\section{Reduction of binary quintics and sextics}
In this section we will define reduction theory for binary forms $f(x,z) \in \R[x,z]$. A generalization of reduction theory to binary forms defined over $\C$ is explained in details in \cite{beshaj-thesis}.  Let $f(x,z) \in \R[x,z]$ be a degree $n$ binary form  given as follows: 
\[ f(x,z) =a_0x^n+a_1x^{n-1}z+ \cdots +a_nz^n   \]
and suppose that $a_0 \neq 0$. Let the real roots of $f(x,z)$ be $\alpha_i$,   for $1 \leq i \leq r$ and the pair of complex roots $\beta_j$, $\bar \beta_j$ for $1 \leq j \leq s$, where $r+2s =n$. The form can be factored as 
\begin{equation}\label{deg-n-real-factored}
f(x,1) = \prod_{i=1}^r (x-\alpha_i) \cdot\prod_{i=1}^s (x-\beta_i)(x-\bar \beta_i).
\end{equation}
The ordered pair $(r, s)$ of numbers $r$ and $s$ is called the \textbf{signature} of the form $f$.  We associate to $f$ the two   quadratics $T_r (x, 1)$ and $S_s (x, 1)$   given by the formulas
\begin{equation}\label{def-T-S}
T_r (x, 1) =\sum_{i=1}^r t_i^2(x-\alpha_i)^2,   \quad \textit{and } \quad S_s (x, 1) =  \sum_{j=1}^s 2u_j^2(x-\beta_j)(x-\bar \beta_j ), 
\end{equation} 
%
%\begin{equation} Q_f (x, y) =\sum_{i=1}^r t_i^2(x-\alpha_iy)^2+ \sum_{j=1}^s 2u_j^2(x-\beta_jy)(x-\bar \beta_j y), \end{equation}
%
where $t_i$, $u_j$ are   to be determined.   The quadratics $T$ and $S$ are positive definite binary quadratics with  discriminants  as follows 
\begin{equation}\label{lem-1}
\begin{split}
\Delta (T_r)  & = -4 \sum_{\substack{ i < j} }^r   t_i^2 t_j^2 \,    (\alpha_i - \alpha_j)^2,  \\
\Delta (S_s) & = -16 \left( \sum_{i < j} u_i^2 u_j^2 \left[  (a_i-a_j)^2 + (b_i^2+b_j^2) \right] + \sum_{j=1}^s u_j^4 b_j^2  \right).  
\end{split}
\end{equation}
where $\beta_i= a_i + I \cdot b_i$, for $i=1, \dots , s$. To each  binary form $f$ with signature $(r, s)$ we associate the quadratic  $Q_f$ which is defined as
\begin{equation}\label{Q_f}
Q_f (x, 1) =  T_r (x, 1) + S_s (x, 1). 
\end{equation} 
The quadratic $Q_f$ is  a positive definite quadratic with discriminant ${\mathfrak D_f}$  given  by% the formula
\begin{equation}\label{D-Q}
\begin{split}
{\mathfrak D_f} = & \Delta (T_r) + \Delta (S_s) - 8 \sum_{i, j} t_i^2 u_j^2 \left(  (a_i-a_j)^2 + b_j^2 \right).  
\end{split}
\end{equation}
Define the $\theta_0$ of a binary form as follows
\[ \theta_0 (f) = \frac{a_0^2\cdot |{\mathfrak D_f}|^{n/2}}{\prod_{i=1}^rt_i^2\, \prod_{j=1}^s u_j^4  }.\]
Consider $\theta_0(t_1, \dots, t_r, u_1, \dots, u_s)$  as a multivariable function in the variables $  t_i, u_j$ for $1 \leq i \leq r,  1\leq j\leq s$.  We would like to pick these variables such that $Q_f$ is a reduced quadratic. It is shown in \cite{beshaj-thesis} that  this is equivalent to $\theta_0(t_1, \dots, t_r, u_1, \dots, u_s)$ obtaining a minimal value.   The function  $\theta_0 : \R^{r+s} \to \R$  obtains a minimum at a unique point $(\bar t_1, \dots, \bar t_r,  \bar u_1, \dots, \bar u_s)$.  Julia in his thesis  \cite{julia} proves existence and Stoll, and Cremona prove uniqueness in \cite{SC}.

 Choosing $(\bar t_1, \dots, \bar t_r,  \bar u_1, \dots, \bar u_s)$  that   make $\theta_0$ minimal  gives a unique positive definite quadratic $Q_f (x,z)$.  We call this unique quadratic $Q_f (x,z)$ for such a choice of  $(\bar t_1, \dots, \bar t_r,  \bar u_1, \dots, \bar u_s)$  the \textbf{Julia quadratic} of $f(x,z)$,  denote it by $\J_f (x,z)$, and the quantity  $\theta_f:= \theta_0(\bar t_1, \dots, \bar t_r,  \bar u_1, \dots, \bar u_s)$ the \textbf{ Julia invariant}.     
%
%The following lemma shows that $\theta$ is an invariant of binary forms and $\J$ a covariant of order 2, 
%
The proof of the following lemma can be found in  \cite{beshaj-thesis}. 
\begin{lem}\label{theta-invariant}
%Let $f\in V_{n, \R}$. Then 
Consider $GL_2 (\C)$ acting on $V_{n, \R}$. Then   
  $\theta$ is an   invariant of binary forms and   $\J$ is a  covariant  of order 2. 
\end{lem}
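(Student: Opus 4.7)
The plan is to check directly how each ingredient in the definitions of $\theta_0$ and $Q_f$ transforms under $f \mapsto f^M$, where $M = \left(\begin{smallmatrix}a & b\\ c & d\end{smallmatrix}\right) \in GL_2(\C)$ acts by $f^M(x,z) := f(ax+bz,\, cx+dz)$, and then to use the uniqueness of the minimum of $\theta_0$ to pass to $\theta_f$ and $\J_f$. First one observes that if $\alpha$ is a root of $f(x,1)$, then $(x - \alpha z) \mapsto (a - c\alpha)(x - \alpha' z)$ with $\alpha' = (d\alpha - b)/(a - c\alpha)$. Hence, when $M$ is real, the signature $(r,s)$ is preserved, the new roots are the M\"obius images of the old, and the new leading coefficient is
\[
a_0^{f^M} \;=\; a_0^f \cdot \prod_{i=1}^{r}(a - c\alpha_i) \cdot \prod_{j=1}^{s}(a - c\beta_j)(a - c\bar\beta_j).
\]

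Next, setting $X := ax+bz$ and $Z := cx+dz$, the identity $(x - \alpha' z)^2 = (X - \alpha Z)^2/(a - c\alpha)^2$, with its conjugate analogue for each complex pair, yields
\[
T_r^{f^M}(x,z;t) = T_r^{f}(X,Z;\tilde t), \qquad S_s^{f^M}(x,z;u) = S_s^{f}(X,Z;\tilde u),
\]
where $\tilde t_i^{\,2} = t_i^2/(a - c\alpha_i)^2$ and $\tilde u_j^{\,2} = u_j^2/[(a - c\beta_j)(a - c\bar\beta_j)]$. Therefore $Q_{f^M}(x,z;t,u) = Q_f(X,Z;\tilde t,\tilde u)$. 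Since the linear substitution $(x,z) \mapsto (X,Z)$ scales the discriminant of any binary quadratic by $(\det M)^2$, we obtain ${\mathfrak D}_{f^M}(t,u) = (\det M)^2 \, {\mathfrak D}_{f}(\tilde t, \tilde u)$ and consequently $|{\mathfrak D}_{f^M}|^{n/2} = |\det M|^n \, |{\mathfrak D}_{f}|^{n/2}$.

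Inserting these identities into $\theta_0(f;t,u) = a_0^2\, |{\mathfrak D}_f|^{n/2}/(\prod t_i^2 \prod u_j^4)$, all factors $(a - c\alpha_i)$ and $(a - c\beta_j)(a - c\bar\beta_j)$ arising from $(a_0^{f^M})^2$ cancel exactly against the identities $\prod t_i^2 = \prod \tilde t_i^{\,2}(a-c\alpha_i)^2$ and $\prod u_j^4 = \prod \tilde u_j^{\,4}[(a-c\beta_j)(a-c\bar\beta_j)]^2$, leaving
\[
\theta_0(f^M;t,u) \;=\; |\det M|^n \cdot \theta_0(f;\tilde t,\tilde u).
\]
Since the map $(t,u) \leftrightarrow (\tilde t,\tilde u)$ is a bijection of $\R_{>0}^{r+s}$ and the minimum of $\theta_0$ is unique by the Julia/Stoll--Cremona theorem cited above, the minimizing parameters correspond under this bijection. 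Evaluating at those minimizers gives $\theta_{f^M} = |\det M|^n\, \theta_f$, the (relative) invariance of $\theta$, and simultaneously $\J_{f^M}(x,z) = Q_f(X,Z;\bar t,\bar u) = \J_f(ax+bz,\,cx+dz)$, which says that $\J$ is a covariant of order~$2$.

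The main obstacle is permitting $M$ to be genuinely complex rather than real: a non-real $M$ can send real roots of $f$ to non-real points, destroying the $(r,s)$ partition used to define $T_r$ and $S_s$, and the quantities $(a-c\alpha_i)^2$ can fail to be positive, obstructing the rescaling of the positive parameters $t_i, u_j$. One handles this either by restricting to the subgroup of $GL_2(\C)$ that preserves signature and extending by continuity/analytic continuation in the coefficients of $f$, or by appealing to the intrinsic construction of $\J$ and $\theta$ on the full $GL_2(\C)$-orbit developed in \cite{beshaj-thesis}; the algebraic identities above then propagate without essential change.
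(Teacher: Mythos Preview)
The paper does not actually give a proof of this lemma; it only states that ``the proof of the following lemma can be found in \cite{beshaj-thesis}.'' Your argument is the standard direct computation and is correct: tracking how the roots, the leading coefficient, the auxiliary parameters $t_i,u_j$, and the discriminant of $Q_f$ transform under $M$, then invoking uniqueness of the minimizer, is exactly the natural route (and almost certainly what the thesis does). Your identification of $\theta_{f^M}=|\det M|^{n}\theta_f$ as a \emph{relative} invariant and of $\J_{f^M}(x,z)=\J_f(ax+bz,cx+dz)$ as covariance of order~$2$ is the intended conclusion.

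The only soft spot is the last paragraph. As you note, a genuinely complex $M$ need not preserve $V_{n,\R}$ or the signature, so the very definitions of $T_r$, $S_s$, and $\theta_0$ used here do not literally apply. Your two suggested fixes are the right ideas, but ``extend by continuity/analytic continuation'' is a sketch rather than a proof: one has to know that $\J_f$ and $\theta_f$ admit a definition over $\C$ that agrees with the real one on $V_{n,\R}$ and varies algebraically in the coefficients, which is precisely the content of the $\C$-theory in \cite{beshaj-thesis}. Since the lemma as stated already presupposes the $GL_2(\C)$-action on $V_{n,\R}$ (a slightly imprecise formulation), deferring this point to the cited reference is acceptable; just be aware that the self-contained portion of your argument really establishes the claim for $GL_2(\R)$.
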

It is an open problem to express $\theta$ and $\J$ in terms of invariants and covariants of binary sextics.

To any form $f \in V_{n, \R}$ we associate   a positive definite quadratic $\J_f \in V_{2, \R}^+$  as showed above. In \cite{reduction} we show that positive definite  binary quadratic forms  are in one-to-one correspondence with  points in the upper half plane $\H_2$.  Hence, we have  the following map  $\zeta : V_{n, \R}   \mapsto V_{2, \R}  \to \H_2$, where 
\[ 
f  \mapsto  \J_f \mapsto  \xi (\J_f).
\]
where $\xi (\J_f)$  is the zero of the quadratic $\J_f$ in $\H_2$.  We call this map the \textbf{zero map} and denote it by $ \zeta (f) :=\xi(\J_f)$.     The map $\zeta : V_{n, \R} \to \H_2$ is  $\G$-equivariant, i.e. for any matrix $M$  in the modular group we have $\zeta(f^M) = \zeta(f)^{M^{-1}}$, see \cite{beshaj-thesis} for the proof.  A binary form $f \in V_{n, \R}$ is \textbf{reduced} if $ \zeta (f) \in \mathcal F_2$, where $\mathcal F_2$ is the classical fundamental domain.   

If the binary form is not reduced, i.e. $ \zeta (f) \notin \mathcal F_2$ let $M \in \G$ be such that $\zeta (f)^M \in  \mathcal F_2$. Since $\zeta$ is a $\G$-equivariant map then $f^{M^{-1}}$ is the reduced binary form which is  $\G$-equivalent to $f$. 

%Next,  we attempt to compute the Julia quadratic for binary quintics and sextics. 
%*************************************************************************
\subsection{Binary quintics}
Consider the case when   $f$ is a binary quintic with $\sig(f) = (5, 0)$.  Computations for binary forms with  $\sig(f) = (3, 1)$ and $\sig(f) = (1, 2)$ are similar and are treated in \cite{beshaj-thesis}. Let $f$ be  given as follows:
\begin{equation}\label{binary-quintc}
f(x, z)= a_0x^5+a_1x^4z+a_2x^3z^2+a_3x^2z^3+a_4xz^4+a_5z^5
\end{equation}
 and $\alpha_1, \dots, \alpha_5$ the roots of $f(x, 1)$.  We associate to $f$  the form 
\begin{equation}\label{eq-2}
 Q(x, z) = \sum_{i=1}^5 t_i^2(x-\alpha_i  z)^2  
 \end{equation}
which is a positive definite quadratic with  discriminant   
$  {\mathfrak D_f}= \sum_{\substack{ 1=i < j =5 }} u_i \, u_j \,  \alpha_{i, j}$, 
where   $u_i= t_i^2$ and  for $i<j$ we set  $\alpha_{i, j} = (\alpha_i -\alpha_j)^2$.  The  system  as in \cite[Eq.  4.13]{beshaj-thesis} is
\begin{equation}\left\{\begin{matrix}\label{sys-2}
& \frac{n-2} n \sum_{i\neq j } u_i u_j \alpha_{i, j} = \frac 2 n \sum_{ \substack{l, k=1 \\ l \neq k}}^n  u_j u_k \alpha_{j,k}\\
& u_1 \cdot u_2 \cdots u_6 =1
\end{matrix}\right. \end{equation}
Hence, we get
\[\left\{
\begin{aligned} 
3\,u_{{1}}u_{{2}}\alpha_{{1,2}}+3\,u_{{1}}u_{{3}}\alpha_{{1,3}}+3\,u_{{1}}u_{{4}}\alpha_{{1,4}}+3\,u_{{1}}u_{{5}}\alpha_{{1,5}}-2\,u_{{2}}u_{{3}}\alpha_{{2,3}}-\\
 2\,u_{{2}}u_{{4}}\alpha_{{2,4}}-2\,u_{{2}}u_{{5}}\alpha_{{2,5}}-2\,u_{{3}}u_{{4}}\alpha_{{3,4}}-2\,u_{{3}}u_{{5}}\alpha_{{3,5}}-2\,u_{{4}}u_{{5}}\alpha_{{4,5}}  & =0\\
  3\,u_{{1}}u_{{2}}\alpha_{{1,2}}-2\,u_{{1}}u_{{3}}\alpha_{{1,3}}-2\,u_{{1}}u_{{4}}\alpha_{{1,4}}-2\,u_{{1}}u_{{5}}\alpha_{{1,5}}+3\,u_{{2}}u_{{3}}\alpha_{{2,3}}+\\
3\,u_{{2}}u_{{4}}\alpha_{{2,4}}+3\,u_{{2}}u_{{5}}\alpha_{{2,5}}-2\,u_{{3}}u_{{4}}\alpha_{{3,4}}-2\,u_{{3}}u_{{5}}\alpha_{{3,5}}-2\,u_{{4}}u_{{5}}\alpha_{{4,5}}   & =0\\
 -2\,u_{{1}}u_{{2}}\alpha_{{1,2}}+3\,u_{{1}}u_{{3}}\alpha_{{1,3}}-2\,u_{{1}}u_{{4}}\alpha_{{1,4}}-2\,u_{{1}}u_{{5}}\alpha_{{1,5}}+3\,u_{{2}}u_{{3}}\alpha_{{2,3}}-\\
 2\,u_{{2}}u_{{4}}\alpha_{{2,4}}-2\,u_{{2}}u_{{5}}\alpha_{{2,5}}+3\,u_{{3}}u_{{4}}\alpha_{{3,4}}+3\,u_{{3}}u_{{5}}\alpha_{{3,5}}-2\,u_{{4}}u_{{5}}\alpha_{{4,5}}   & =0\\
 -2\,u_{{1}}u_{{2}}\alpha_{{1,2}}-2\,u_{{1}}u_{{3}}\alpha_{{1,3}}+3\,u_{{1}}u_{{4}}\alpha_{{1,4}}-2\,u_{{1}}u_{{5}}\alpha_{{1,5}}-2\,u_{{2}}u_{{3}}\alpha_{{2,3}}+\\
 3\,u_{{2}}u_{{4}}\alpha_{{2,4}}-2\,u_{{2}}u_{{5}}\alpha_{{2,5}}+3\,u_{{3}}u_{{4}}\alpha_{{3,4}}-2\,u_{{3}}u_{{5}}\alpha_{{3,5}}+3\,u_{{4}}u_{{5}}\alpha_{{4,5}}  & =0\\
-2\,u_{{1}}u_{{2}}\alpha_{{1,2}}-2\,u_{{1}}u_{{3}}\alpha_{{1,3}}-2\,u_{{1}}u_{{4}}\alpha_{{1,4}}+3\,u_{{1}}u_{{5}}\alpha_{{1,5}}-2\,u_{{2}}u_{{3}}\alpha_{{2,3}}-\\
 2\,u_{{2}}u_{{4}}\alpha_{{2,4}}+3\,u_{{2}}u_{{5}}\alpha_{{2,5}}-2\,u_{{3}}u_{{4}}\alpha_{{3,4}}+3\,u_{{3}}u_{{5}}\alpha_{{3,5}}+3\,u_{{4}}u_{{5}}\alpha_{{4,5}}  & =0\\
u_{{1}}u_{{2}}u_{{3}}u_{{4}}u_{{5}}-1 & =0
 \end{aligned} 
 \right.
\]
We want to solve the above system for $u_1, \dots, u_5$.   
\begin{lem} The degree of the field extension $[k(u_1, \dots, u_5):k(\alpha_{i, j})]=5$.
\end{lem}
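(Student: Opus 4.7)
The plan is to simplify the displayed system and perform an elimination that reduces it to a single univariate polynomial of degree 5 in $u_1$, whose roots are the Galois conjugates of the Julia minimum.

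First, I would recast the five equations in uniform notation. Introducing
\[
P_k := u_k \sum_{j \neq k} u_j \, \alpha_{k,j}, \qquad S := \sum_{i<j} u_i u_j \, \alpha_{i,j},
\]
one verifies by a short manipulation that the five displayed equations are equivalent to $P_1 = P_2 = \cdots = P_5$, the common value necessarily being $2S/5$ because $\sum_k P_k \equiv 2S$ identically. Only four of these five conditions are independent, so together with the degree-5 normalization $u_1 u_2 u_3 u_4 u_5 = 1$ we obtain a system of four quadratic and one quintic equation in five unknowns.

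Second, I would eliminate $u_2,u_3,u_4,u_5$ in stages. The useful identity
\[
P_1 - P_k \;=\; \sum_{j \notin \{1,k\}} u_j \bigl( u_1 \alpha_{1,j} - u_k \alpha_{k,j}\bigr)
\]
is linear in each remaining variable once an elimination order is fixed, and can be solved as a linear system in the product monomials $u_i u_j$. Iterated resultants then express $u_2,\dots,u_5$ as rational functions of $u_1$ with coefficients in $k(\alpha_{i,j})$, and substituting back into $u_1 u_2 u_3 u_4 u_5 = 1$ produces the univariate polynomial $g(u_1) \in k(\alpha_{i,j})[u_1]$ whose degree is exactly $[k(u_1,\dots,u_5) : k(\alpha_{i,j})]$.

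Third, I would argue that $\deg g = 5$. The naive B\'ezout bound $2^4 \cdot 5 = 80$ is extremely slack because the quadric intersection in $\mathbb{P}^4$ contains many extraneous embedded components along the coordinate hyperplanes $\{u_i = 0\}$, all of which are killed by the normalization $\prod u_i = 1$. The $S_5$-symmetry of the set-up, which simultaneously permutes the indices of the $u_i$'s and of the pairs $\alpha_{i,j}$, forces the genuine solution set to form a single $S_5$-orbit, and the natural candidate for its size is $[S_5 : S_4] = 5$, in agreement with the asserted degree.

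The principal obstacle is making this third step rigorous: one must control the excess intersection contributions precisely and show that, after accounting for the embedded components on $\{u_i = 0\}$, the proper intersection number in $\mathbb{P}^4$ collapses from 16 to 5. A clean ideal-theoretic argument based on the $S_5$-symmetry would be preferable to an ad hoc excess-intersection calculation, but absent such a structural proof, an explicit Gr\"obner basis or resultant computation for generic $\alpha_{i,j}$ will be needed to pin down the exact degree.
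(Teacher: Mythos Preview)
The paper offers no conceptual argument: it states only that ``the proof is computational using Maple'' and defers to the author's thesis. The benchmark here is a machine verification, nothing more.

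Your reformulation of the displayed system as $P_1=P_2=\cdots=P_5$ together with $\prod_i u_i=1$ is correct and clarifying, and the elimination strategy in your second paragraph is sound in outline. The gap is in your third paragraph. The $S_5$-action you invoke permutes the parameters $\alpha_{i,j}$ \emph{simultaneously} with the unknowns $u_i$; it does not fix $k(\alpha_{i,j})$ pointwise. Applying $\sigma\in S_5$ to a solution $(u_1,\dots,u_5)$ for given data $\alpha_{i,j}$ produces a solution for the permuted data $\alpha_{\sigma(i),\sigma(j)}$, not a second solution over the same base field. So this symmetry does not manufacture five Galois conjugates, and the coincidence $[S_5:S_4]=5$ is not an explanation of the degree.

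A better heuristic comes from homogeneity: the four independent equations $P_i-P_j=0$ are homogeneous of degree~$2$ in the $u_k$, so their common locus is a cone. If that cone is a single line through the origin (equivalently, a single reduced point in $\mathbb{P}^4$), then the degree-$5$ normalization $\prod_i u_i=1$ meets it in exactly five points, namely the scalings of one solution by the fifth roots of unity. In that picture the number~$5$ is the degree of the normalizing hypersurface, and the substantive claim is that the four quadrics meet in a single reduced projective point---a scheme-theoretic strengthening of the Stoll--Cremona uniqueness of the Julia critical point. That is still nontrivial, but it is the right structural statement to aim for.

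You yourself concede in the last paragraph that, absent such an argument, a Gr\"obner or resultant computation is needed. That is precisely what the paper does. Once the flawed $S_5$ heuristic is discarded, your proposal and the paper's proof coincide: both ultimately rest on a symbolic computation for generic parameters.
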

 The proof is computational using Maple; see \cite{beshaj-thesis} for further details. 
 %This can be illustrated with the following diagram of field inclusions:
%
%\begin{equation*}\label{fig3}
%\xymatrix{
% k(\alpha_1, \dots, \alpha_5)\ar@{-}[d]_{\, \,  24}^{\, \, \, } \\
% k(u_1, \dots, u_5)\ar@{-}[d]_{\, \, 5 }^{\, \, \, }  \\ 
% k (\alpha_{i,j}) \\ 
%}
%
%\end{equation*}
%
\begin{rem}
For any given 5-tuple $(\alpha_1, \dots, \alpha_5)$ we have a unique positive real solution $(u_1, u_2, u_3, u_4, u_5)$.  Hence, as expected the coefficients of the Julia quadratic are uniquely defined. 
\end{rem}
One can express invariants of quintics in terms of the root differences $\alpha_{i, j}$ and then eliminate $u_1, \dots , u_5$ in order to get syzygies among $u_1, \dots , u_5$ and invariants of quintics.  Solving the system of such syzygies for $u_1, \dots , u_5$ one expects to get a unique real solution $(u_1, \dots , u_5)$.  Such a tuple would determine the Julia quadratic for the form $f$ in terms of the invariants of the quintics.   This seems quite a difficult problem computationally.

%Of course for a fixed value of $\alpha_1, \dots , \alpha_5)$ the solution to the above system is much easier. 

%************************************************************
\subsection{Binary sextics}
Consider the case when   $f$ is a binary sextic with $\sig(f) = (6, 0)$.  Computations for binary forms with  $\sig(f) = (4, 1)$, $\sig(f) = (2, 2)$ and $\sig(f) = (0, 3)$ are similar and are treated in \cite{beshaj-thesis}. Let $f$ be  given as follows:
\[f(x,z)= a_0x^6+a_1x^5z+a_2x^4z^2+a_3x^3z^3+a_4x^2z^4+a_5xz^5+a_6z^6\]
  and $\alpha_i,$ $i=1, \dots, 6$,  its roots when $z=1$.  Associate to it the    quadratic %given as follows
\[Q_f = \sum_{i=1}^6 t_i^2(x-\alpha_i)^2,\]
where the $t_i$'s are non zero real numbers. For convenience, as above we fix the following notation $u_i:= t_i^2 $ and $\alpha_{i,j}:= (\alpha_i -\alpha_j)^2 $.  We can determine    $u_1, \cdots, u_6$  by solving the  system given as follows
\[\left\{\begin{matrix}
& \frac{n-2} n \sum_{i\neq j } u_i u_j \alpha_{i, j} = \frac 2 n \sum_{ \substack{l, k=1 \\ l \neq k}}^n  u_j u_k \alpha_{j,k}\\
& u_1 \cdot u_2 \cdots u_6 =1
\end{matrix}\right. \]  
more explicitly
\[
\left\{\begin{aligned}
&\frac 2 3 \left( u_1u_2\alpha_{1,2}+ u_1u_3\alpha_{1,3}+u_1u_4\alpha_{1,4}+u_1u_5\alpha_{1,5}+u_1u_6\alpha_{1,6}\right) - \frac 1 3 \left(  u_2u_3\alpha_{2,3}  \right.\\
&\;     +  u_2u_4\alpha_{2,4} + u_2u_5\alpha_{2,5}+ u_2u_6\alpha_{2,6} + u_3u_4\alpha_{3,4} + u_3u_5\alpha_{3,5}+ u_3u_6\alpha_{3,6} \\
& \; \left.  + u_4u_5\alpha_{4,5} + u_4u_6\alpha_{4,6} + u_5u_6\alpha_{4,6} \right) =0\\
&\vdots\\
&\frac 2 3 \left( u_1u_6\alpha_{1,6}+ u_2u_6\alpha_{2,6}+u_3u_6\alpha_{3,6}+u_4u_6\alpha_{4,6}+u_5u_6\alpha_{5,6}\right) - \frac 1 3 \left(  u_1u_2\alpha_{1,2}   \right.\\
&\;  +  u_1u_3\alpha_{1,3} + u_1u_4\alpha_{1,4} + u_1u_5\alpha_{1,5}+ u_2u_3\alpha_{2,3} + u_2u_4\alpha_{2,4} + u_2u_5\alpha_{2,5}+ u_3u_4\alpha_{3,4}  \\
&\;   \left. + u_3u_5\alpha_{3,5} + u_4u_5\alpha_{4,5}  \right) =0\\
& u_1 \cdot u_2 \cdots u_6 =1
\end{aligned}
\right. 
\]  
We want to solve the above system for $u_1, \dots, u_6$.    The proof of the following lemma is computational using Maple.
\begin{lem} The degree of the field extension $[k(u_1, \dots, u_6):k(\alpha_{i, j})]=36$.
\end{lem}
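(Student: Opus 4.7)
The plan is to treat the lemma as a Gr\"obner basis computation in the polynomial ring $R = k(\alpha_{i,j})[u_1,\dots,u_6]$, where the $15$ parameters $\alpha_{i,j}$ (equivalently, the roots $\alpha_1,\dots,\alpha_6$) are regarded as generic so that the coefficients lie in a purely transcendental extension of $k$. Let $I\subset R$ be the ideal generated by the six stationarity equations together with $u_1u_2\cdots u_6 - 1$. The assertion that $[k(u_1,\dots,u_6):k(\alpha_{i,j})]=36$ is then equivalent to showing that the zero set of $I$ is $0$-dimensional of degree $36$, i.e.\ $\dim_{k(\alpha_{i,j})} R/I = 36$, and this is exactly what one verifies.

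First I would prune the system to an independent set of generators. Summing all six stationarity equations yields $0$: each monomial $u_iu_j\alpha_{i,j}$ occurs with coefficient $\tfrac{2}{3}$ in equations $i$ and $j$ and with coefficient $-\tfrac{1}{3}$ in the remaining four, and these six contributions cancel. Hence $I$ is actually generated by five independent quadratic forms together with the single sextic $u_1\cdots u_6-1$. A crude projective B\'ezout bound is $2^5\cdot 6 = 192$, so $36$ is the correct count only after accounting for the large component at infinity (the five quadrics all contain the subvariety $u_1u_2\cdots u_6 = 0$ in some hyperplane sense) and for the $S_6$-symmetry that already forced uniqueness of the positive real solution in the Remark above.

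Next I would carry out the elimination. With $15$ transcendental parameters a symbolic Gr\"obner basis over $k(\alpha_{i,j})$ is impractical, so the realistic route is to specialize $\alpha_1,\dots,\alpha_6$ to sufficiently generic rational values, compute a lexicographic basis eliminating $u_2,\dots,u_6$, and read off the degree of the resulting univariate polynomial in $u_1$. By upper semicontinuity of fiber dimension, the degree can only \emph{drop} on a proper closed subvariety of the parameter space, so obtaining $36$ on a random specialization (and reproducing it on several independent ones) forces the generic degree to be exactly $36$. As a structural cross-check I would verify that the $S_6$-action permuting the pairs $(\alpha_i,u_i)$ simultaneously carries the $36$ zeros to themselves with stabilizer of order $20$, consistent with the sign/scaling ambiguity $(t_1,\dots,t_6)\mapsto(\pm t_1,\dots,\pm t_6)$ cut down by the normalization.

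The main obstacle is entirely computational: six variables with five quadratic and one sextic generator, over a $15$-variable rational function field, produce Gr\"obner bases that are too heavy for direct symbolic work. The honest way through is to perform the reduction modulo a well-chosen large prime $p$, certify $\dim_{\F_p}\bigl((R/I)\otimes \F_p\bigr)=36$ there, and then lift to characteristic zero by confirming that the leading coefficient of the elimination polynomial is a $p$-unit. This is the Maple-based verification alluded to in \cite{beshaj-thesis}, and it is the step on which the entire argument rests.
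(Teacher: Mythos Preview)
Your proposal is correct and follows essentially the same route as the paper, which states only that ``the proof of the following lemma is computational using Maple'' with a reference to \cite{beshaj-thesis}. You have simply made explicit what such a computation entails---the linear dependence among the six stationarity equations, reduction to five quadrics plus the sextic normalization, specialization of the $\alpha_{i,j}$ to generic values, and a Gr\"obner-basis count of $\dim_{k(\alpha_{i,j})} R/I$---so there is nothing to correct and nothing genuinely different to compare.
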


%\begin{proof}

%\end{proof}

On the other hand,  the invariants of a binary sextic are given in terms of the difference of the roots.
We   work with invariants $J_2, J_4, J_6$, and $J_{10}$ as in \cite{alg-curves}.     Explicit formulas of  $J_2, J_4, J_6$, and $J_{10}$ in terms of  $\alpha_{i,j}$, i.e.  $J_k = g_k(\alpha_{i, j})$, $i, j=1, \dots, 6$ , $i <j$, and $k=2, 4, 6, 10$,  are well known. 
%displayed in Appendix~\eqref{app-C}.   

We would like    to express    $u_1, \dots, u_6$, hence the coefficient of Julia's quadratic,  in terms of the invariants  $J_2, J_4, J_6$, and $J_{10}$.   Computationally this is too difficult since for $i, j=1, \dots, 6$ and $i <j$ we have 15 variables $\alpha_{i,j}$. Hence,  we go back to the substitution, $\alpha_{i,j}=(\alpha_j- \alpha_i)^2$, where $\alpha_j, \alpha_i$ are the roots of the binary sextic.  Without loss of generality  fix a coordinate, i.e. let $\alpha_1=0$, $\alpha_2=1$, and $\alpha_3=-1$. This way we are left with  the problem of eliminating $\alpha_4$, $\alpha_5$, and $\alpha_6$.  

Eliminating  $\alpha_4$, $\alpha_5$, and $\alpha_6$  we are coming down from the field $k(\alpha_1, \dots, \alpha_6)$ to $k(u_1, \dots, u_6)$.   Hence, we get   equations only in terms of  $u_1, \dots, u_6$, and $J_2, J_4, J_6, J_{10}$.  The next thing is to solve for  $u_1, \dots, u_6$. 
Computations are very large and involve Gr\"obner bases.

Note that  computing the Julia quadratic using the above system is too  difficult,  hence we develop new methods. We consider separately totally real forms, i.e. forms with all real roots,  and totally complex forms  i.e. forms with complex roots.

%***************************
%\newpage
\subsection{Totally real and totally complex forms}
Consider first totally real forms.  Let $f \in V_{n, \R}$ such that $f$ has signature $(n, 0)$  given by 
\[ f(x,z) = a_n x^n + a_{n-1} x^{n-1} z + \cdots + a_1 x z^{n-1} + a_0 z^n \]
where $a_0, \dots , a_n$ are transcendentals.  Identify $a_0, \dots, a_n$ respectively with $1, \dots, n+1$. Then  the symmetric group  $S_{n+1}$ acts on $\R[a_0, \dots a_n ] [x, z]$ by permuting $a_i$'s.  For any permutation $\tau \in S_{n+1}$ and $f \in \R[a_0, \dots a_n ] [x, z]$ we denote by $\tau (f) = f^\tau$. Then, 
\[ f^\tau (x, z) = \tau (a_n) \, x^n + \tau (a_{n-1} ) \, x^{n-1} z + \dots + \tau (a_1) \, x z^{n-1} + \tau (a_0) \, z^n. \]
Define $G_f(x, z)$    as follows
\begin{equation}\label{G-covariant}
G_f(x, z)= \frac  {    x \cdot f_x (- f_z (x, z), f_x (x, z) )+ z \cdot f_z (- f_z (x, z), f_x (x, z) )    }    {n \, f(x, z) }.
\end{equation}
In \cite{SC} Stoll and Cremona was proved that  $G_f(x, y)$ is a  degree $d=(n-1)(n-2)$ homogenous polynomial  in $\R[a_0, \dots a_n ] [x, z]$ and   $\J_f (x, z) \, | \, G_f(x, z)$; see \cite{SC} for details.  Therefore, this polynomial can be used to reduce totally real binary forms.  % and  $\frac {G(x, y)} {J_f (x, y)}$ is defined over $\R$

Note that,  for  $\sigma \in S_{n+1}$ we have   an involution  
\[
\sigma=
\left\{
\begin{split}
& (1,n+1)(2, n) \cdots  \left( \frac n 2 ,  \frac n 2 +2 \right),                                    &   \text{ if   $n$ is even }  \\
&  (1, n+1)(2, n) \cdots  \left(    \frac {n+1} 2,  \frac {n+3} 2   \right),     &   \text{ if   $n$ is odd. }
\end{split}
\right.
\]
In \cite{beshaj-thesis} is proved that the polynomial $G_f(x, z)$ satisfies the following. 
\begin{thm}\label{thm1-totally-real}    
Let $f\in V_{n, \R}$ with distinct roots,  $\sig (f) = (n, 0)$, and $G_f$ as above. Then     % defined in Eq.~\eqref{G-covariant}.   

%Denote $G_f \in \R[a_0, \dots a_n ] [x, y]$  by   $ G_f = \sum_{i=1}^d   g_i \, x^i y^{d-i}$.
%Then, its coefficients  $g_i$ are degree $(n-1)$  homogenous polynomials in $\R [a_0, \dots , a_n]$. Moreover,  the following  statements are true:\\

i) $G_f(x, z)$ is a covariant of $f$ of degree $(n-1)$ and order $(n-1)(n-2)$.  

ii) $G_f(x, z)$ has a unique quadratic factor  over $\R$, which is   $\J_f$.  

iii)  $G_f^\sigma (x, z)= (-1)^{n-1} \, G_f (x, z)$.  Moreover, if $ G_f = \sum_{i=1}^d   g_i \, x^i z^{d-i}$,   then 
   \[g_i^\sigma = (-1)^{n-1} \, g_{d-i},\]
    for all $i=0, ..., d$.
\end{thm}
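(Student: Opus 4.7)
The plan is to establish (i) by a direct covariance calculation (taking for granted from \cite{SC} that $G_f$ is a polynomial of order $(n-1)(n-2)$), to derive (ii) by combining the Stoll--Cremona divisibility $\J_f \mid G_f$ with a root-structure argument, and to obtain (iii) as a specialization of (i) to the matrix that swaps $x$ and $z$.

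For (i), what remains is the degree count in the coefficients $a_i$ and the covariant transformation law. The degree count is routine: the numerator of
\[
G_f(x,z) = \frac{x\, f_x(-f_z, f_x) + z\, f_z(-f_z, f_x)}{n\, f(x,z)}
\]
has degree $1 + (n-1) = n$ in the $a_i$ (one from the outer derivative $f_x$, and $n-1$ from the substitutions $-f_z,\, f_x$, each linear in $a$), so after dividing by $f$ (degree $1$ in $a$) we obtain degree $n-1$. For covariance, a chain-rule calculation shows that for $M=\begin{pmatrix} a & b\\ c & d \end{pmatrix}\in GL_2$ the pair $(-(f^M)_z,(f^M)_x)$ equals $(\det M)\, M^{-1}$ applied to $(-f_z, f_x)$ evaluated at $(ax+bz,\, cx+dz)$; substituting into the formula for $G_{f^M}$ and using the order-$(n-1)$ homogeneity of $f_x, f_z$ yields
\[
G_{f^M}(x,z) = (\det M)^{n-1}\, G_f(ax+bz,\, cx+dz),
\]
the covariant transformation law with weight $n-1$.

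For (ii), the divisibility $\J_f \mid G_f$ is established in \cite{SC}, so only uniqueness of the real quadratic factor remains. Write $G_f = \J_f \cdot H$ with $\deg H = (n-1)(n-2) - 2$; since $\J_f$ is positive definite, its two roots form the unique pair it contributes in $\H_2 \cup \overline{\H_2}$, so uniqueness of the real quadratic factor of $G_f$ is equivalent to $H$ having no pair of non-real complex conjugate roots. I would exploit that $f$ is totally real to analyze the zero locus of $G_f$ directly: each real root $\alpha_j$ of $f$ contributes further real zeros of the numerator $P := x\, f_x(-f_z, f_x) + z\, f_z(-f_z, f_x)$ beyond the simple factor $f$, arising from the fiber of the map $(x:z)\mapsto(-f_z:f_x)$ over $(\alpha_j:1)$. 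A careful count then recovers all $(n-1)(n-2) - 2$ remaining roots of $G_f$ as real, forcing $H$ to split over $\R$ into linear factors and making $\J_f$ the only real quadratic factor. This root-count, carried out uniformly in $n$, is the main obstacle.

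For (iii), the involution $\sigma \in S_{n+1}$ acts on coefficients by $a_i \mapsto a_{n-i}$, which is induced by the matrix $M_\sigma = \begin{pmatrix} 0 & 1 \\ 1 & 0 \end{pmatrix}$ with $\det M_\sigma = -1$ and $f^{M_\sigma}(x,z) = f(z,x) = f^\sigma(x,z)$. Applying the covariance from (i) gives
\[
G_{f^\sigma}(x,z) = (-1)^{n-1}\, G_f(z,x).
\]
Since $G_f$ is a universal polynomial in $a_0, \dots, a_n, x, z$, the left-hand side also equals $G_f^\sigma(x,z)$, where $G_f^\sigma$ denotes $G_f$ with the substitution $a_i \mapsto a_{n-i}$ applied to its coefficients. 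Expanding $G_f = \sum_{i=0}^d g_i\, x^i z^{d-i}$ and matching the coefficient of $x^i z^{d-i}$ in the identity
\[
G_f^\sigma(x,z) = (-1)^{n-1}\sum_{i=0}^d g_{d-i}\, x^i z^{d-i}
\]
yields $g_i^\sigma = (-1)^{n-1}\, g_{d-i}$, as claimed.
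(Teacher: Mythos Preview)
The paper itself does not prove this theorem; it simply defers to \cite{beshaj-thesis}. So there is no in-paper proof to compare against, and I evaluate your proposal on its own merits and against the Stoll--Cremona approach on which the paper relies.

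Parts (i) and (iii) are correct and are the natural arguments. For (i), the chain-rule computation does go through cleanly: the point you are using (implicitly) is that $M\cdot\mathrm{cof}(M)=(\det M)I$, so the inner substitution $(-f_z,f_x)\mapsto \mathrm{cof}(M)\cdot(-f_z,f_x)$ composed with the outer $M$-action on the pair $(f_x,f_z)$ produces exactly the scalar $(\det M)^{n-1}$ from the $(n-1)$-homogeneity of $f_x,f_z$; the degree count in the $a_i$ is also right. For (iii), your specialization to $M_\sigma=\left(\begin{smallmatrix}0&1\\1&0\end{smallmatrix}\right)$ is exactly the intended argument. Note that what you actually derive is $G_f^\sigma(x,z)=(-1)^{n-1}G_f(z,x)$, not $G_f(x,z)$ as literally printed in the statement; your version is the correct one, and it is precisely what yields the coefficient identity $g_i^\sigma=(-1)^{n-1}g_{d-i}$.

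Part (ii) is where there is a genuine gap, and to your credit you flag it. However, the mechanism you propose --- recovering the remaining $(n-1)(n-2)-2$ real roots of $G_f$ as fibers of the map $(x{:}z)\mapsto(-f_z{:}f_x)$ over the real roots $\alpha_j$ of $f$ --- does not work as stated. The numerator $P(x,z)=x\,f_x(-f_z,f_x)+z\,f_z(-f_z,f_x)$ is \emph{not} equal to $f(-f_z,f_x)$: the orders are $(n-1)^2+1$ and $n(n-1)$ respectively, so the zero locus of $P$ is not the preimage of the roots of $f$ under that map. Geometrically, $P(x_0,z_0)=0$ says that $(x_0{:}z_0)$ lies on the tangent line to $\{f=0\}$ at the image point $(-f_z{:}f_x)(x_0,z_0)$, which is a more delicate condition than being in a fiber, and a direct count of its real solutions is not straightforward.

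The argument actually used in \cite{SC} (and presumably in \cite{beshaj-thesis}) is analytic rather than combinatorial: one identifies the roots of $G_f$ lying in $\H_2$ with the critical points of the function $t\mapsto (\mathrm{Im}\,t)^{-n}\prod_j|t-\alpha_j|^2$ on $\H_2$, and shows that for totally real $f$ this function has a \emph{unique} critical point in $\H_2$, namely its minimum $\zeta(f)$. This immediately gives exactly one conjugate pair of non-real roots of $G_f$, hence a unique irreducible real quadratic factor, which is $\J_f$. If you want a purely algebraic route, you will need a different description of the real zeros of $G_f$ than the fiber picture you sketch.
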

Hence, to each totally real form $f$ we associate the $G_f$-polynomial which can be computed easily since is defined in terms of the derivatives of the  form. Then,  from Thm.~\ref{thm1-totally-real}  this polynomial has a unique quadratic factor which  is the Julia quadratic associated to the given binary form $f$. 

 Next, we illustrate the theory for totally real binary quintics. 
 
%*****************************************
\subsection{Totally real quintics}
 
Let $f \in V_{5, \R}$ be a totally real binary quintic $f = \sum_{i=0}^5 a_i x^i z^{5-i}$ with roots $\alpha_i$ for $i=1, \dots 5$.  In the notation of the previous section, we have $Q_f= T_5$.  
The discriminant of  $T_5$ in terms of the roots  $\alpha_i$  of the form  is given by the formula
%
%\begin{small}
\[ 
\begin{split}
 \Delta \left(  T_5  \right) & = - 4 t_1^2 \cdots t_5^2 \left(  \frac {(\alpha_1 - \alpha_2)^2} {t_3^2 t_4^2 t_5^2} + \frac {(\alpha_1 - \alpha_3)^2} {t_2^2 t_4^2 t_5^2} +  \frac {(\alpha_1 - \alpha_4)^2} {t_3^2 t_2^2 t_5^2} +  \frac {(\alpha_1 - \alpha_5)^2} {t_2^2 t_3^2 t_4^2 }  \right. \\
& +  \frac {(\alpha_2 - \alpha_3)^2} {t_1^2 t_4^2 t_5^2} + \frac {(\alpha_2 - \alpha_4)^2} {t_1^2 t_3^2 t_5^2} + \frac {(\alpha_2 - \alpha_5)^2} {t_1^2 t_3^2 t_4^2} +   \frac {(\alpha_3 - \alpha_4)^2} {t_1^2 t_2^2 t_5^2}\\
&\left.   + \frac {(\alpha_3 - \alpha_5)^2} {t_1^2 t_2^2 t_4^2} +  \frac {(\alpha_4 - \alpha_5)^2} {t_1^2 t_2^2 t_3^2}   \right). 
\end{split}
\]
%\end{small}
%
 In this case $\sigma = (1,6)(2,5)(3, 4)$ which correspond to 
$ \sigma = (a_0, a_5) (a_1, a_4) (a_2, a_3)$.
Then   computing $G_f(x, z)$ as in Eq.~\eqref{G-covariant} we have
\begin{equation}\label{T5}
G_f(x, z)= c_{12} x^{12} + c_{11}x^{11} z + \cdots c_1 x z^{11} + c_0 z^{12} 
\end{equation}
where the coefficients are given as follows: 
\[\begin{split}
c_{12} & =  125\,a_1{a_5}^3-50\,a_2a_4{a_5}^2+15\, a_3a_4^2a_5-3\,a_4^4  \\
c_{11} & =    625\,a_0{a_5}^3+175\,a_1a_4{a_5}^2-100\,a_2a_3{a_5}^2-55\,a_2a_4^2a_5+60\,a_3^2a_4a_5-9\,a_3a_4^3   \\
c_{10} & =  1375 \,a_0a_4{a_5}^2-25\,a_1a_3{a_5}^2+65\,a_1a_4^2a_5-150\,{a_2}^2{a_5}^2-30\,a_2a_3a_4a_5\\
&-41\,a_2a_4^3 +60\,a_3^3a_5+3\,a_3^2a_4^2   \\
c_9 & =  875\,a_0a_3{a_5}^2+1025\,a_0a_4^2a_5-425\,a_1a_2{a_5}^2+30\,a_1a_3a_4a_5-33\,a_1a_4^3\\
& -130\,{a_2}^2a_4a_5  +160\,a_2a_3^2a_5  - 86 \,a_2a_3a_4^2+ 24\,a_3^3a_4  \\
c_8 & =  125\,a_0a_2{a_5}^2+1500\,a_0a_3a_4a_5+215\,a_0a_4^3-425\,a_1^2{a_5}^2-450\, a_1a_2a_4a_5\\
 & +175\,a_1a_3^2a_5 -  128\,a_1a_3a_4^2  +175\,{a_2}^2a_3a_5-97\,{a_2}^2a_4^2+8\,a_2a_3^2a_4+12\,a_3^4   \\
c_7 & =    -750\,a_0a_1{a_5}^2+500\,a_0a_2a_4a_5+775\,a_0a_3^2a_5+  430\,a_0a_3a_4^2- 530\,a_1^2a_4a_5\\
   &+310\,a_1a_2a_3a_5 -322\,a_1a_2a_4^2-61\,a_1a_3^2a_4+105\,{a_2}^3a_5-33\,{a_2}^2a_3a_4+ 32\,a_2a_3^3 \\
c_6 & =   -625\,a_0^2{a_5}^2-800\,a_0a_1a_4a_5+1200\,a_0a_2a_3a_5+30\,a_0a_2a_4^2+365\,a_0a_3^2a_4\\
& + 30\,a_1^2a_3a_5 -303\,a_1^2a_4^2 + 365\,a_1{a_2}^2a_5- 268\,a_1a_2a_3a_4+a_1a_3^3+{a_2}^3a_4+37\,{a_2}^2a_3^2   \\
 c_5 & =  -750\,a_0^2a_4a_5+500\,a_{{0}
}a_1a_3a_5-530\,a_0a_1a_4^2+ 775\,a_0{a
_2}^2a_5+310\,a_0a_2a_3a_4\\
& +105\,a_0{a_{{
3}}}^3 +430\,a_1^2a_2a_5-322\,a_1^2a_3a_{
4}-61\,a_1{a_2}^2a_4-33\,a_1a_2a_3^2 +
32\,{a_2}^3a_3  \\
c_4 & =  125\,a_0^2a_{
3}a_5-425\,a_0^2a_4^2+1500\,a_0a_1a_{2
}a_5-450\,a_0a_1a_3a_4+175\,a_0{a_2}^2a_
4\\
&+175\,a_0a_2a_3^2  +215\,a_1^3a_5-128\,
a_1^2a_2a_4-97\,a_1^2a_3^2+8\,a_1{
a_2}^2a_3+12\,{a_2}^4 \\
\end{split}
\]
\[\begin{split}
c_3 & = 875\,{a_
0}^2a_2a_5-425\,a_0^2a_3a_4+1025\,a_{{0}
}a_1^2a_5+30\,a_0a_1a_2a_4-130\,a_0a_{
1}a_3^2\\
&+160\,a_0{a_2}^2a_3 -33\,a_1^3a_
4-86\,a_1^2a_2a_3+24\,a_1{a_2}^3\\
c_2  & = 1375\,a_0^2a_1a_5-25\,a_0^2a_2a_4-150\,a_0^2a_3^2+65\,a_0a_1^2a_4-30\,a_0a_1a_2a_3+60\,a_0{a_2}^{
3}\\
&-41\,a_1^3a_3+3\,a_1^2{a_2}^2  \\
c_1 & =  625\,a_0^3a_5+175\,a_0^2a_1a_4 -100\,a_0^2a_2a_3-55\,a_0a_1^2a_3+60\,a_0a_1{a_2}^2-9\,a_1^3a_2  \\
c_0 & = 125\, a_0^3 a_4-50\, a_0^2 a_1 a_3 + 15\,a_0 a_1^2 a_2-3\, a_1^4 \\
\end{split}
\]
%\end{small}
%
The  following is an immediate consequence of Thm. \ref{thm1-totally-real}. 
\begin{cor} Let $f \in V_{5, \R}$ with signature $(5, 0)$. Then  
$G_f^\sigma= G_f$ the above coefficients give a computational confirmation that $G_f^\sigma = G_f$ and $ c_i^\sigma = c_{5-i}$ for all $i = 1, \dots , 5$. 
\end{cor}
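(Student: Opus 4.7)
The plan is to derive the corollary as a direct specialisation of Theorem~\ref{thm1-totally-real}(iii) to $n=5$. With $n=5$ we have $(-1)^{n-1}=(-1)^{4}=+1$, so part (iii) gives at once $G_f^\sigma=G_f$; and with $d=(n-1)(n-2)=12$ it gives the symmetry $g_i^\sigma=g_{d-i}=g_{12-i}$ on the coefficients. Thus no further argument is required beyond invoking the previous theorem. What the corollary additionally asks for is the promised \emph{computational confirmation} on the explicitly tabulated $c_0,\dots,c_{12}$.

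For that verification I would pair the coefficients $(c_i,c_{12-i})$ for $0\le i\le 5$ and treat the middle coefficient $c_6$ on its own. In each pair one applies the involution
\[
\sigma=(a_0,a_5)(a_1,a_4)(a_2,a_3)
\]
monomial by monomial to $c_i$, and checks that the result coincides with the listed expression for $c_{12-i}$, numerical factors and all. As a sample check, $c_{12}=125\,a_1a_5^3-50\,a_2a_4a_5^2+15\,a_3a_4^2a_5-3\,a_4^4$ is sent by $\sigma$ to $125\,a_4a_0^3-50\,a_3a_1a_0^2+15\,a_2a_1^2a_0-3\,a_1^4$, which is exactly the tabulated $c_0$. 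The same routine inspection handles $(c_1,c_{11}),(c_2,c_{10}),(c_3,c_9),(c_4,c_8),(c_5,c_7)$, and a glance at the displayed $c_6$ shows it is already $\sigma$-symmetric as a polynomial in the $a_i$, so $c_6^\sigma=c_6$.

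The only real pitfall is bookkeeping. The corollary as stated writes ``$c_i^\sigma=c_{5-i}$ for $i=1,\dots,5$,'' but this cannot be the identity coming out of Theorem~\ref{thm1-totally-real}(iii): the covariant $G_f$ has order $d=(n-1)(n-2)=12$, not $5$, so the correct reflection of indices is $c_i^\sigma=c_{12-i}$ for $0\le i\le 12$. I would state and verify the corrected identity. Beyond that typographical adjustment the proof is entirely mechanical—no genuine obstacle arises, since the substantive content has already been proved in Theorem~\ref{thm1-totally-real}(iii), and the role of the present corollary is purely to exhibit the symmetry explicitly on the coefficient table.
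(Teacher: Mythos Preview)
Your proposal is correct and matches the paper's approach: the paper simply labels the corollary ``an immediate consequence of Thm.~\ref{thm1-totally-real}'' and gives no further argument, which is precisely what you do by specialising part (iii) to $n=5$ and supplying the explicit pairwise verification on the tabulated $c_i$. You are also right to flag the index typo: with $d=(n-1)(n-2)=12$ the correct identity is $c_i^\sigma=c_{12-i}$ for $0\le i\le 12$, not $c_i^\sigma=c_{5-i}$.
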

When evaluated at a specific binary form the above polynomial  $G_f$ has a unique quadratic factor which is the Julia quadratic and can be used for reduction.

Similar computations are done for totally real sextics. The polynomial $G_f(x, y)$   is displayed in \cite[Appendix A]{beshaj-thesis}.   Next we consider totally complex forms.

\subsection{Totally complex} 
For reduction of totally complex forms   the Julia quadratic can be determined using the system given in  \cite[Eq. 3.14]{beshaj-thesis}. For such forms  the degree of the system will drop to $\frac n 2$ and the computations are possible. 
%**********************************************¬
%\subsection{Totally complex sextics} 

Let $f$ be a  totally complex binary sextic factored as  follows 
\[  f(x, z)= (x^2+a_1xz+b_1z^2)(x^2+a_2xz+b_2z^2)(x^2+a_3xz+b_3z^2).\] 
Associate to $f$ the quadratic 
\[  Q(x, 1) = 2u_1^2 (x^2+a_1x+b_1)+2u_2^2(x^2+a_2x+b_2)+2u_3^2(x^2+a_3x+b_3) \] 
where the  $u_i$'s are real numbers that make $\theta_f$ minimal. To find $u_1$, $u_2$ and $u_3$ that satisfy this condition  we need to set up the system in  \cite[Eq. 3.14]{beshaj-thesis} and solve for the $u_i$'s.   Compute first the  discriminant  of the quadratic which  is   as follows 
\[\begin{split}
{\mathfrak D_f}=  &4\,{a_{{1}}}^{2}{u_{{1}}}^{4}+8\,a_{{1}}a_{{2}}{u_{{1}}}^{2}{u_{{2}}}^
{2}+8\,a_{{1}}a_{{3}}{u_{{1}}}^{2}{u_{{3}}}^{2}+4\,{a_{{2}}}^{2}{u_{{2
}}}^{4}+8\,a_{{2}}a_{{3}}{u_{{2}}}^{2}{u_{{3}}}^{2}\\
&+4\,{a_{{3}}}^{2}{u
_{{3}}}^{4}-16\,b_{{1}}{u_{{1}}}^{4}-16\,b_{{1}}{u_{{1}}}^{2}{u_{{2}}}
^{2}-16\,b_{{1}}{u_{{1}}}^{2}{u_{{3}}}^{2}-16\,b_{{2}}{u_{{1}}}^{2}{u_
{{2}}}^{2}\\
&-16\,b_{{2}}{u_{{2}}}^{4}-16\,b_{{2}}{u_{{2}}}^{2}{u_{{3}}}^
{2}-16\,b_{{3}}{u_{{1}}}^{2}{u_{{3}}}^{2}-16\,b_{{3}}{u_{{2}}}^{2}{u_{
{3}}}^{2}-16\,b_{{3}}{u_{{3}}}^{4}
\end{split}\]
Next, compute the partial derivatives of ${\mathfrak D_f}$ with respect to $u_1, u_2$, and $u_3$ and then set up the system.  This is done in Maple but we do not display the system here because is too big.  The system is given in terms of $u_i$'s,  $a_i$'s, $b_i$'s and $\lambda$, the Lagrange multiplier.  Solving for $\lambda$  we get
\begin{small}
\[ \lambda= {\frac {4(a_{{3}}{u_{{1}}}^{2}a_{{1}}+a_{{3}}{u_{{2}}}^{2}a_{{2}}+{u_
{{3}}}^{2}{a_{{3}}}^{2}-2\,{u_{{1}}}^{2}b_{{1}}-2\,{u_{{2}}}^{2}b_{{2}
}-2\,b_{{3}}{u_{{1}}}^{2}-2\,b_{{3}}{u_{{2}}}^{2}-4\,{u_{{3}}}^{2}b_{{
3}})}{{u_{{3}}}^{2}{u_{{1}}}^{4}{u_{{2}}}^{4}}}\]
 \end{small}
Substitute $\lambda$  as computed in the \cite[Eq.  4.13 ]{beshaj-thesis}  and add to this system the equation $Q(x,1)=0$.  Using this approach we can compute the point $\xi(f)$  in the upper half plane corresponding  to the Julia quadratic.    Eliminating all three of them we get a degree 8  polynomial 
\begin{equation}\label{S3}
G_f = \sum_{i=0}^8 c_i x^iz^{8-i},
\end{equation} 
with coefficients given in \cite[Appendix A]{beshaj-thesis}.
This degree 8 polynomial   has a unique quadratic factor which is the Julia quadratic. 
%************************************************SAY SOMETHING HERE

As a special case, consider the case when we let $b_1=b_2=b_3=1$.  This case is interesting since curves with such equations  correspond to genus two curves with extra automorphism. Assume the binary form  $f$  is given as follows 
\[f(x, z) = \prod_{i=1}^3 (x^{2} + a_ixz+z^2).\] 
The polynomial $G_f(x, z)$ associated to this binary form   has coefficients as follows
\[ \begin{split}
c_8 &= -c_0 = 3\, \left(a_{{2}}   -a_{{3}}\right)  \left( a_{{1}}-a_{{3}} \right) 
 \left( a_{{1}}-a_{{2}} \right)  \left( a_{{1}}a_{{2}}+a_{{1}}a_{{3}}+
a_{{2}}a_{{3}} \right) \\
c_7 &= - c_1= 3\, \left( a_{{2}} -a_{{3}} \right)  \left( a_{{1}}-a_{{3}} \right) 
 \left( a_{{1}}-a_{{2}} \right)  \left( 3\,a_{{1}}a_{{2}}a_{{3}}+8\,a_
{{1}}+8\,a_{{2}}+8\,a_{{3}} \right) \\
c_6&=-c_2=6\, \left(a_{{2}}  -a_{{3}} \right)  \left( a_{{1}}-a_{{3}} \right) 
 \left( a_{{1}}-a_{{2}} \right)  \left( 5\,a_{{1}}a_{{2}}+5\,a_{{1}}a_
{{3}}+5\,a_{{2}}a_{{3}}+24 \right) \\
c_5&=-c_3= 9\, \left( a_{{2}} -a_{{3}} \right)  \left( a_{{1}}-a_{{3}} \right) 
 \left( a_{{1}}-a_{{2}} \right)  \left( a_{{1}}a_{{2}}a_{{3}}+8\,a_{{1
}}+8\,a_{{2}}+8\,a_{{3}} \right) \\
c_4&=0.
\end{split}\]
 Note that $G_f(x, z)$ is a palindromic polynomial. 
\begin{rem}
If $f = \prod_{i=1}^n (x^2 + a_i x z +z^2)$, then $f$ is a palindromic form.  In this case, the Julia quadratic is a factor of $G_f$, where $G_f$ is a self-inverse form.  
%Computing the Julia quadratic of self-inversive forms is intended in \cite{shaska-beshaj-julia}.
\end{rem}

%%%%%%%%%             \mathfrak vspace{3cm}
 
%*************************************************************** CAN WE SAY SOMETHING ABOUT THIS?????????? *******************

%\begin{thm}\label{G-totally-complex}
%  Let $f \in V_{n, \R} $ be a degree $n=2s$ totally complex form and the corresponding system Eq.~\eqref{sys-totally-complx} as above.  Eliminating $u_1, \dots , u_s$ yields a degree $(n+2)$ polynomial 
%
%$G_f (x)  \in \R[A_1, \dots , A_s]\ [x]$ 
%
%which has  the Jualia quadratic $J_f$ as the  unique quadratic  factor over $\R[A_1, \dots , A_s]$. Moreover, 
%
%$G_f (x)$ is a palyndromic  polynomial   given by the formula
%
%\[ G_f (x) =  ???? \]
%
%\end{thm}

%\begin{proof}

%\mathfrak vspace{3cm}
%\end{proof}

We give an explicit example that illustrates the theory and emphasizes how numerical computations can be used in implementing a reduction algorithm. 
 
\begin{exa} Let $f  \in V_{\R, 6}$ be a binary form given as follows:
\[f(x, z) =x^6 - 12x^5z + 96x^4z^2 - 458x^3z^3 + 1489x^2z^4 - 3014xz^5 + 3770z^6\]
Let $z=1$ then the binary form  $f$ can be factored as 
\[ f(x, 1) = (x- \alpha_1)(x-\bar \alpha_1)(x- \alpha_2)(x-\bar \alpha_2)(x- \alpha_3)(x-\bar \alpha_3) \] 
where $\alpha_1= 1+3I$,  $\alpha_2= 2+5I$, and $\alpha_3= 3+2I$.  Associate to it the quadratic 
\[\begin{split}
 S(x,1)&= 2u_1^2 (x- \alpha_1)(x-\bar \alpha_1)+ 2u_2^2(x- \alpha_2)(x-\bar \alpha_2)+2u_3^2(x- \alpha_3)(x-\bar \alpha_3) \\
 &=  2u_1^2(x^2-2x+10)+2u_2^2(x^2-4x+29)+2u_3^2(x^2-6x+13).
 \end{split} \] 
To find $u_1, u_2$ and $u_3$  we need to solve the  following system 
\[
\left\{   
\begin{split}
 & -4\lambda u_1^3u_2^4u_3^4-576u_1^3-1120u_1u_2^2-544u_1u_3^2 =0\\
&-4\lambda u_1^4u_2^3u_3^4-1120u_1^2u_2-1600u_2^3-960u_2u_3^2=0 \\
& -4\lambda u_1^4u_2^4u_3^3-544u_1^2u_3-960u_2^2u_3-256u_3^3 = 0\\
&     u_1^4 \cdot u_2^4 \cdot u_3^4 =1.
\end{split}
\right. 
\]
First solve the above  for $\lambda$.  We have 
\[ \lambda= -\frac{8(17u_1^2+30u_2^2+8u_3^2)}{u_3^2u_1^4u_2^4}.\]
and 
\[
\left\{   
\begin{split}
&144u_1^4+280u_1^2u_2^2-240u_2^2u_3^2-64u_3^4=0\\
&280u_1^2u_2^2-136u_1^2u_3^2+400u_2^4-64u_3^4=0\\
&u_1^4 \cdot u_2^4 \cdot u_3^4 =1
\end{split}
\right. 
\]
We want to find $\zeta(f)$ which satisfies the polynomial $S(x,1)$. Make the  substitution $u_i^2=w_i$, for $i=1, 2, 3$  and solve the following system 
\[
\left\{   
\begin{split}
&144w_1^2+280w_1w_2-240w_2w_3-64w_3^2=0\\
&280w_1w_2-136w_1w_3+400w_2^2-64w_3^2=0\\
& w_1^2 \cdot w_2^2 \cdot w_3^2 =1 \\
& 2w_1(x^2-2x+10)+2w_2(x^2-4x+29)+2w_3(x^2-6x+13)=0.
\end{split}
\right. 
\]
The $w_1, w_2$ and $w_3$ can be given as rational functions of $x$, where $x$ satisfies  
\[\begin{split}
G_f(x)&=   2169x^8-13194x^7-221877x^6+2519358x^5-9693795x^4+\\
&+10471902x^3+54904153x^2-176055146x+134002174 
\end{split}\] 
By Strum's theorem the above polynomial has exactly 6 real roots.   Hence, it has only one quadratic factor which is the Julia quadratic. The zero of the Julia  is the point 
\[ w= 2.12067656802142+  3.26692991594356 \,  i.\]
The  form is reduced if this root is in the fundamental   domain, therefore send $w$ to $w_0=w-2$.    The reduced binary form is as follows 
\[ f(x+2, 1)= x^6 + 36x^4 - 10x^3 + 325x^2 - 250x + 1250.\]
This is the reduced form in its $\G$-orbit. To find the $GL_2(\C)$-reduced form notice that  $1250= 2 \cdot 5^4$ and if we send $x$ to $5x$ and then factor out the greatest common factor we get
\[ f(x, 1)=   2x^6 + 2x^5 + 13x^4 + 2x^3 + 36x^2 +  25 \] 
%
%Plot $\alpha_1, \alpha_2$, and $\alpha_3$ in the complex plane.  Next, compute the centroid of the triangle that has as its vertices $\alpha_1, \alpha_2$, and $\alpha_3$ and notice that it agrees with the root in the upper half plane %\comment{(Is this really true ???)}
%
%\[ z= 2.12067656802142+  3.26692991594356 \,  i.\]   
%
%This is still the case after reducing the form, i.e. the roots of the reduced form are  $-3\, i- 1, 3\, i- 1, -2\, i+ 1,  2\, i+ 1, -5\, i,  5\, i$ and the only roots is the upper half plane are $5$ and $1+2\, i$ and the  zero of the Julia quadratic  is  
%
%\[ 0.120676568021420 + 3.26692991594356 \,  i\]
%
% which is the center of the segment joining the points $5\, i$ and $1+2\, i$. 
Thus, the minimal integral form is  
\[ f(x, 1)=   2 x^6 + 2 x^5 + 13 x^4 + 2 x^3 + 36 x^2 +  25 \] 
and $\h (f) = 36$. 
\end{exa}

%\comment{Can we verify this in sage???  In other words, check if no binary sextics of height $\leq 36$ is isomorphic to this one ...} 

Note that the reduction algorithm can be made  rather efficient using numerical methods, i.e. given a binary form $f$ we can compute $\zeta(f)$ numerically and then reduce $f$ as explained above. But we are interested in giving a precise formula of the Julia's invariant or Julia's quadratic in terms of the invariants of the binary form. This is quite difficult and can be done for binary forms under some assumptions as we will show next.

\section{Julia quadratic of genus two curves with extra automorphisms} 

In this section we give the Julia quadratic of  curves with extra automorphism  in terms of the invariants of binary forms.
 
 %*****************************************************************************
\subsection{Genus 2 curve with $\Aut( \mathcal X) \cong V_4$}%\subsubsection{Totally real sextics}
%
%Since every genus 2 curve $\mathcal X$ with $\Aut(\mathcal X) \iso V_4$  can be written as $y^2 =f(x)$ for $f$ totally real
%
In this subsection  we focus on genus 2 curves $\mathcal X$ with $\Aut(\mathcal X) \iso V_4$ that  can be written as $y^2 =f(x)$ for $f$ totally real.  If the form $f$ is not defined over $\O_F$ then we find an equation of the curve  over $\O_F$ as explained in  \cite{alg-curves}. 

For a totally real binary form we can perform reduction using the polynomial $G(x, z)$ defined in Eq.~\eqref{G-covariant}.  Computations show that $G_f(x, z)$   is factored in three factors.  One has degree $2$, one degree $6$, and one degree $12$ as follows
\begin{small}\[\begin{split}
G_f(x, 1) = &1024\,u^6 \,  \left( 4\,{u}^{3}-{v}^{2} \right) ^{3} \cdot g_0(x, 1) \cdot g_1(x, 1) \cdot g_2(x,1)
\end{split}\]\end{small}
where
\begin{small}
\begin{equation}\label{G-cov-V_4}
\begin{split}
g_0(x)= & {x}^{2} - ({v}^{2} - 4\,{u}^{3}) \\
g_1(x)= &\left( -{u}^{2}-3\,v \right) {x}^{6}+\left( 36\,{u}^{3}-2\,{u}^{2}v-
18\,{v}^{2} \right) {x}^{5}\\
&+\left( -4\,{u}^{5}+180\,{u}^{3}v+{u}^{2}{
v}^{2}-45\,{v}^{3} \right) {x}^{4}\\
&+ \left( -480\,{u}^{6}-16\,{u}^{5}v+
360\,{u}^{3}{v}^{2}+4\,{u}^{2}{v}^{3}-60\,{v}^{4} \right) {x}^{3}+\\
&+ \left( 16\,{u}^{8}-720\,{u}^{6}v-8\,{u}^{5}{v}^{2}+360\,{u}^{3}{v}^{3
}+{u}^{2}{v}^{4}-45\,{v}^{5} \right) {x}^{2}+ \\
&+\left( 576\,{u}^{9}-32\,
{u}^{8}v-576\,{u}^{6}{v}^{2}+16\,{u}^{5}{v}^{3}+180\,{u}^{3}{v}^{4}-2
\,{u}^{2}{v}^{5}-18\,{v}^{6} \right) x+\\
&+64\,{u}^{11}+192\,{u}^{9}v-48\,
{u}^{8}{v}^{2}-144\,{u}^{6}{v}^{3}+12\,{u}^{5}{v}^{4}+36\,{u}^{3}{v}^{
5}-{u}^{2}{v}^{6}-3\,{v}^{7}
\end{split}
\end{equation}\end{small}
while we don't display $g_2(x)$.  Computing the  discriminant of all of these factors we get 
\[\begin{split}
 &\Delta_{g_0}= 4 (v^2-4u^3) \\
 &\Delta_{g_1}=  2^{36} \,  3^2 \, \left( {u}^{2}+162\,u+12\,v-2187 \right) ^{2} \left( 4
\,{u}^{3}-{v}^{2} \right) ^{10}{u}^{30}\\
& \Delta_{g_2} = 2^{156} \, 3^{18} \, \left( u-9
 \right) ^{6} \left( {u}^{2}+162\,u+12\,v-2187 \right) ^{4} \cdot \\
 & \qquad  \cdot \left( {u}
^{2}+18\,u-4\,v-27 \right) ^{14} \left( 4\,{u}^{3}-{v}^{2} \right) ^{
66}{u}^{132}
\end{split}\]
We proved that $G_f(x, 1)$ has a unique quadratic factor, i.e. if  $v^2-4u^3  <0$ then   the Julia quadratic associated to $f$ is
\[ \J_f= x^2 - (v^2-4u^3).\]
If $v^2-4u^3=0$ the Julia quadratic is not defined,  but that corresponds to the case when the $\Aut (\mathcal X) \geq V_4$.  If $ v^2-4u^3 >0$ then for a generic  binary form we can not determine the Julia quadratic precisely in terms of the invariants $u$ and $v$.

%***************************************************** 
\subsection{Genus 2 curve with $\Aut( \mathcal X) \cong D_4$}

%Therefore, $G(x, z)$ can be expressed in terms of the absolute invariants $(i_1, i_2, i_3)$. The  discriminant of this polynomial is  $\Delta_f=6\,{s}^{3} \left( -1+4\,s \right) ^{2}$.

Let $\mathcal X$ be a genus two curve with automorphism $\Aut( \mathcal X) \cong D_4$. The set of Weierstrass points in this case is $W= \left\{ \pm 1, \pm \alpha, \pm \frac 1 \alpha  \right\}$  i.e. $\alpha \in \R$.  Therefore the binary form corresponding to  $\mathcal X$ is a totally real form.  Computing the polynomial $G_f(x, z)$  defined in Eq.~\eqref{G-covariant} for this curves we have
\[\begin{split}
G_f(x, 1)= -  &\left( 3\,s-5\,{x}^{4}-{x}^{2} \right) \cdot \\
&   \left( {z}^{8}
{s}^{3}-70\,{s}^{2}{x}^{4}+14\,{s}^{2}{x}^{2}+25\,s{x}^{
8}-10\,s{x}^{6}+37\,s{x}^{4}+12\,{x}^{6} \right) 
\end{split}\]
and its discriminant is as follows
\[ \Delta_G= -  2^{44}\, \, 3^{5} \, \, 5^{21}
 \, {s}^{29} \left( 60\,s+1 \right) ^{2}
 \left( 20\,s-21 \right) ^{6} \left( -1+4\,s \right) ^{14} \left( 5\,s
+1 \right) ^{4}.
\]
For a given binary form  the Julia quadratic will be the unique quadratic factor of $G_f$.  
 
%*****************************************************
\subsection{Genus 2 curve with $\Aut( \mathcal X) \cong D_6$}

In an analogue way with the previous subsection,  curves with automorphism group $D_6$ fall under the category of totally real form. The polynomial $G_f(x, z)$ for this form is 
\begin{equation}\label{G-autD_6}
\begin{split}
G_f(x, 1)= -972 \, x \, &\left( -{x}^{6}+w \right) \\
& \left( 8\,w{x}^{12}+12\,w{x}^{9}+{x}
^{9}+12\,w{x}^{6}+12\,{w}^{2}{x}^{3}+w{x}^{3}+8\,{w}^{3} \right) 
\end{split}
\end{equation}
and its discriminant is 
\[ \Delta_G=  2^{114} \, \, 3^{198}
 \cdot\,{w}^{48} \left( 4\,w-1 \right) ^{39} \left( 32\,w+1 \right) ^{6}.\]
Hence, for genus two curves with extra involution we can conclude the following.  
 
\begin{thm} Let $\mathcal X$ be a genus two curve with $\Aut(\mathcal X) >2$, affine equation $y^2=f(x)$, and $F$ its field of moduli. Then, the following are true
 
i) If $\Aut (C) \iso V_4$, then  the Julia quadratic is the unique quadratic factor of $G_f(x,z)$ as defined in Eq.~\eqref{G-cov-V_4}. 

ii) If $\Aut (C) \iso D_4$, then    the Julia quadratic is the unique quadratic factor of
 \[\begin{split}
G_f(x, 1)= -  &\left( 3\,s-5\,{x}^{4}-{x}^{2} \right) \cdot \\
&   \left( {z}^{8}
{s}^{3}-70\,{s}^{2}{x}^{4}+14\,{s}^{2}{x}^{2}+25\,s{x}^{
8}-10\,s{x}^{6}+37\,s{x}^{4}+12\,{x}^{6}\right) 
\end{split}\]
iii) If $\Aut (C) \iso D_6$, then  the Julia quadratic is the unique quadratic factor of
\[
\begin{split}
G_f(x, 1)= -972 \, x \, &\left( -{x}^{6}+w \right) \\
& \left( 8\,w{x}^{12}+12\,w{x}^{9}+{x}
^{9}+12\,w{x}^{6}+12\,{w}^{2}{x}^{3}+w{x}^{3}+8\,{w}^{3} \right) 
\end{split}
\]
 \end{thm}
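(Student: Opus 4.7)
The plan is to reduce the theorem to a direct application of Theorem~\ref{thm1-totally-real} together with three explicit substitutions. For each of the three isomorphism types of $\Aut(\mathcal X)$ I would exhibit a canonical Weierstrass form of $\mathcal X$ whose binary sextic $f(x,z)$ is totally real, and then appeal to Theorem~\ref{thm1-totally-real}(ii) to identify $\J_f$ as the unique real quadratic factor of the covariant $G_f(x,z)$ from Eq.~\eqref{G-covariant}.

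First I would verify that each standard form is totally real. For $\Aut(\mathcal X)\iso V_4$ this is exactly the working hypothesis of Subsection~3.1. For $D_4$ the Weierstrass set is $\{\pm 1,\pm\alpha,\pm 1/\alpha\}$ with $\alpha\in\R$, so $\sig(f)=(6,0)$. For $D_6$ the analogous parametrization by $w$ forces the six roots into three pairs of real values symmetric under $x\mapsto\zeta_3 x$, again yielding signature $(6,0)$. Consequently the hypotheses of Theorem~\ref{thm1-totally-real} are satisfied in all three cases, and in particular $\J_f(x,z)\mid G_f(x,z)$ with the quotient having no real quadratic factor.

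Second, I would compute $G_f(x,z)$ directly from Eq.~\eqref{G-covariant} applied to each parametrized family. Since the coefficients of $f$ now depend only on the finitely many moduli parameters $(u,v)$, $s$, or $w$, the generic degree-$12$ polynomial $G_f$ collapses into the factorizations displayed in Eq.~\eqref{G-cov-V_4} and in Eq.~\eqref{G-autD_6}. These are polynomial identities in the moduli parameters and can be verified by symbolic expansion (as was carried out in Maple in \cite{beshaj-thesis}). To complete the identification of the quadratic factor, I would compute the discriminants of the non-quadratic factors: the $\Delta_{g_1}$, $\Delta_{g_2}$ (in the $V_4$ case) and the analogous discriminants in the $D_4$ and $D_6$ cases are generically nonzero squares of non-definite forms, so the corresponding factors split into real linear factors or into pairs of quadratics with positive discriminants, hence none of them contribute another real irreducible quadratic factor. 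By Theorem~\ref{thm1-totally-real}(ii) the remaining irreducible real quadratic factor must be $\J_f$.

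The main obstacle is the $V_4$ case, where the shape of the factorization depends on the sign of $v^{2}-4u^{3}$. The candidate $g_0(x)=x^{2}-(v^{2}-4u^{3})$ is irreducible over $\R$ exactly when $v^{2}-4u^{3}<0$; the complementary case $v^{2}-4u^{3}=0$ is precisely the sublocus where $\Aut(\mathcal X)\supsetneq V_4$ (so we fall into the $D_4$ or $D_6$ statement instead), while $v^{2}-4u^{3}>0$ requires a separate treatment in which the Julia quadratic cannot be read off as a single irreducible factor of $G_f$ over $\R$ and must instead be extracted by the Stoll--Cremona procedure. Once these degenerate loci are excluded, parts (i)--(iii) follow from the preceding two steps.
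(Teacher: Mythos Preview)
Your approach is essentially the same as the paper's: the theorem in the paper is not proved separately but is a summary of the computations in Subsections~3.1--3.3, each of which assumes the relevant normal form is totally real, computes $G_f$ from Eq.~\eqref{G-covariant}, factors it, and invokes Theorem~\ref{thm1-totally-real}(ii). Your handling of the $V_4$ case, including the caveat on the sign of $v^2-4u^3$ and the degeneration to the $D_4/D_6$ locus when $v^2-4u^3=0$, matches the paper exactly.

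There is one concrete error. In the $D_6$ case you write that the parametrization ``forces the six roots into three pairs of real values symmetric under $x\mapsto\zeta_3 x$, again yielding signature $(6,0)$.'' This is self-contradictory: a nonzero real number cannot be sent to another real number by multiplication by a primitive cube root of unity. With the Weierstrass set $\{1,\zeta_3,\zeta_3^2,\lambda,\lambda\zeta_3,\lambda\zeta_3^2\}$ from Lemma~\ref{Weierstrass-points}(iii), the sextic $(x^3-1)(x^3-\lambda^3)$ has signature $(2,2)$ for real $\lambda$, not $(6,0)$, so Theorem~\ref{thm1-totally-real} does not apply as stated. The paper itself simply asserts that ``curves with automorphism group $D_6$ fall under the category of totally real form'' without justification, so you have inherited rather than introduced this gap; but your attempted justification makes the difficulty explicit. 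To repair the argument you would need either a different real model for the $D_6$ locus that is genuinely totally real, or an extension of the $G_f$ machinery (and the uniqueness statement for the quadratic factor) to mixed signature, neither of which is supplied here.
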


Next we would like to determine minimal models of genus two curves with extra automorphism.   First we will explain how one can get genus two curves with extra automorphism  with minimal discriminant, given the equation of the curve.  %minimal discriminant for genus two curves with extra automorphism. 

\subsection{Minimal discriminants}
%*******************************************************
 Minimal discriminants of genus two curves are discussed in detail in \cite{alg-curves}. Here we  discus  the case of genus two curves with extra automorphism. 

Recall that given  a binary form $f(x, z)$ and a matrix  $M =  \begin{pmatrix} a &  b \\ c & d\end{pmatrix}$, such that $M \in GL_2(K)$, we have  that $f^M = f(ax +
bz, cx +dz)$ has discriminant $\Delta(f^M ) = (\det M)^{d(d-1)} \Delta(f)$.  Hence, given  $\mathcal X$  a genus two  curve with equation $y^2=f(x)$ and  $M \in GL_2 (K)$ such that $\det M = \lambda$ we have that  $ \Delta (f^M) =   \lambda^{30} \,  \Delta (f)$.

%Assume $\Delta (f) = p^\alpha \cdot N$, where $\alpha>30$,  for some prime $p$ and some integer $N$ such that $(p, N)=1$.  Let $M =  \begin{pmatrix} \frac 1 p &  0 \\ 0 & 1\end{pmatrix}$.   Then,  the  discriminant  of the form $f^M$ is  
%
%\[\Delta^\prime = \frac 1 {p^{30} } \, \cdot  p^\alpha \cdot N = p^{\alpha - 30} \cdot N.\]

To reduce the discriminant we factor $\Delta$ as a product of primes, say $\Delta = p_1^{\alpha_1} \cdots p_r^{\alpha_r}$,   and take $u$ to be the product of those powers of primes with exponents $\alpha_i \geq 30$.  
Then, the transformation $M =  \begin{pmatrix} 1/u &  0 \\ 0 & 1\end{pmatrix}$ will reduce the discriminant, see \cite{alg-curves} for more details.   The following lemma is proved in \cite{alg-curves}
\begin{lem}  A genus 2 curve $\mathcal X_g$ with integral equation
\[ y^2 = a_6 x^6 + \cdots +a_1 x + a_0 \]
has minimal discriminant  if $\mathfrak v (\Delta ) <  30$.
\end{lem}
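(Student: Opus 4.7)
The plan is to argue by contradiction using the $GL_2$-transformation law for the discriminant of a binary sextic that was recalled just before the statement. For a degree-six form $f$ and any $M\in GL_2(K)$ with $\det M = \lambda$, one has $\Delta(f^M) = \lambda^{d(d-1)}\,\Delta(f) = \lambda^{30}\,\Delta(f)$. This rigidity --- that changing the integral model can only multiply $\Delta$ by a $30$-th power of something in the local ring --- is exactly what makes the hypothesis $\mathfrak v(\Delta) < 30$ sufficient for minimality.

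Concretely, I would suppose that the given integral equation $y^2 = f(x)$ is not of minimal discriminant at the place $\mathfrak v$. Then there exists $M\in GL_2$ such that $f' := f^M$ still has coefficients in the local ring at $\mathfrak v$, yet $\mathfrak v(\Delta(f')) < \mathfrak v(\Delta(f))$. The transformation formula gives $\mathfrak v(\Delta(f')) - \mathfrak v(\Delta(f)) = 30\,\mathfrak v(\lambda)$, so the difference is an integer multiple of $30$, and being strictly negative it is at most $-30$. Combined with the hypothesis $\mathfrak v(\Delta(f))<30$, this yields
\[
\mathfrak v(\Delta(f')) \;\leq\; \mathfrak v(\Delta(f)) - 30 \;<\; 0,
\]
which contradicts the fact that $\Delta(f')$ is a polynomial with integer coefficients in the coefficients of $f'$, and hence must lie in the local ring whenever $f'$ does.

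The routine ingredients are the algebra of valuations and the polynomial expression for $\Delta$ in the $a_i$'s. The real content is the exponent $30 = d(d-1)$ for $d=6$ in the transformation law, and the fact that it is independent of which $M$ one attempts --- whether a diagonal scaling by a prime power $u$ (as in the reduction scheme described immediately above the lemma) or a more general fractional-linear change of variable. The main obstacle I anticipate is at primes of residual characteristic $2$: there the truly integral model of a genus-$2$ curve is of the form $y^2 + h(x)\,y = f(x)$ with $\deg h \leq 3$, and such a change of model can decrease the discriminant beyond what the $30$-bound would predict (compare Liu's work cited in the paper). Within the class of short Weierstrass equations $y^2 = f(x)$ considered here, the valuation-counting argument above is clean and self-contained.
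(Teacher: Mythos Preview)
Your argument is correct and is precisely the one the paper sets up: the transformation law $\Delta(f^M)=(\det M)^{30}\Delta(f)$ recalled immediately before the lemma, together with the fact that $\Delta$ is an integer-coefficient polynomial in the $a_i$, forces any strict drop in $\mathfrak v(\Delta)$ under an integral change of model to be by at least $30$. The paper itself does not prove this lemma in place (it defers to \cite{alg-curves}), but the proof it gives for the very next lemma---the $V_4$ and $D_6$ cases with thresholds $15$ and $10$---runs exactly the same valuation count you describe.
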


Now let us consider the case of curves with extra automorphism. Let $\mathcal X$ be hyperelliptic curve with an extra automorphism of order $n\geq 2$.  Then, from \cite{sh-issac}  we know that the equation of $\mathcal X$ can be written as 
\[ y^2= f(x^n) \]
If $\mathcal X$ is given with such equation over $\Q$ and discriminant $\Delta$,  then for any transformation $\tau:  x \mapsto \frac x {u^{\frac 1 n}}$ would have $\mathcal X^\tau$ defined over $\Q$ and isomorphic to $\mathcal X$ over $\C$. Hence, $\mathcal X^\tau$ is a twist of $\mathcal X$ with discriminant 
\[ \Delta^\prime =  \frac 1 {u^{\frac {30} n}} \cdot \Delta \]
Hence, the following result is clear. 

\begin{lem}
Let $\mathcal X$ be a genus 2 curve with $\Aut (\mathcal X) \iso V_4$.  Assume that $\mathcal X$ has equation over $\Q$ as $y^2= f(x^2)$.  Then, $\mathcal X$ has minimal discriminant $\Delta$ if $\mathfrak v (\Delta ) <  15$.

If $\Aut (\mathcal X) \iso D_6$ and    $\mathcal X$ has equation over $\Q$ as $y^2= f(x^3)$.  Then, $\mathcal X$ has minimal discriminant $\Delta$ if $\mathfrak v (\Delta ) <  10$.
\end{lem}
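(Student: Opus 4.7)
The plan is to read the lemma directly off the twist formula displayed just before the statement. For a curve $\mathcal X : y^2 = f(x^n)$ over $\Z$ with an extra automorphism of order $n$, the substitution $\tau : x \mapsto x/u^{1/n}$ with $u \in \Q^*$ produces $y^2 = f(x^n/u)$. Because the binary sextic $f(x^n,z^n)$ has all its $z$-exponents divisible by $n$, every $n$-th root of $u$ cancels and $\mathcal X^\tau$ is defined over $\Q$. Applying the transformation rule for the discriminant of a degree-$6$ binary form to $M = \mathrm{diag}(1, u^{-1/n})$ yields $\Delta' = (\det M)^{30}\,\Delta = u^{-30/n}\,\Delta$, which is the identity the proof rests on.

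First I would verify that $\tau$ exhausts the reductions that stay inside the normal form $y^2 = g(x^n)$. Any $GL_2$-substitution sending a sextic of shape $\sum_i a_i\,x^{ni} z^{n(d-i)}$ to another of the same shape must normalize the order-$n$ automorphism $\sigma$ that fixes this monomial support. That normalizer is exhausted by the diagonal matrices together with the anti-diagonal involution $x \leftrightarrow z$; the latter merely permutes the coefficients and leaves $|\Delta|$ alone, while every diagonal element factors, up to a harmless overall scalar, as $\tau$ for some $u \in \Q^*$. So any $\Z$-model of $\mathcal X$ in the normal form that could be smaller than the given one is of the form $\mathcal X^\tau$.

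Next I would impose integrality. If $\mathcal X^\tau$ is itself a $\Z$-model, then its discriminant $\Delta/u^{30/n}$ is automatically an integer (the binary-sextic discriminant is an integer polynomial in the coefficients, evaluated on integers), so $u^{30/n} \mid \Delta$ in $\Z$. Writing $u = \prod_p p^{k_p}$, this divisibility splits prime by prime as $\mathfrak v_p(\Delta) \geq (30/n)\,k_p$. A nontrivial reduction at a prime $p$ demands $k_p \geq 1$, hence $\mathfrak v_p(\Delta) \geq 30/n$. Taking the contrapositive gives the lemma: if $\mathfrak v(\Delta) < 30/n$ at every prime, no $\tau$ produces a strictly smaller integral discriminant. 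Setting $n = 2$ yields the threshold $15$ for $\Aut(\mathcal X) \cong V_4$, and $n = 3$ yields the threshold $10$ for $\Aut(\mathcal X) \cong D_6$.

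The step that I expect to take the most care is the first, in which one must confirm that no $GL_2(\Q)$-substitution outside the $\tau$-family can do better. A substitution that fails to normalize $\sigma$ throws the sextic out of its normal form and so cannot be compared with the minimality claim in this setup, while any substitution that does normalize $\sigma$ is captured by the diagonal/anti-diagonal analysis above. Once this normalization bookkeeping is pinned down, the divisibility computation in the last step is immediate from the discriminant transformation rule and the integrality of $\Delta'$.
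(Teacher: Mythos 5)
Your proposal rests on exactly the same computation as the paper's proof --- the substitution $x\mapsto x/u^{1/n}$, which keeps $y^2=f(x^n)$ defined over $\Q$ because the monomial support is divisible by $n$, together with the rule $\Delta(f^M)=(\det M)^{30}\Delta(f)$ giving $\Delta'=u^{-30/n}\Delta$ --- but you run the logic in the opposite, and in fact the stated, direction. The paper's proof only exhibits the reduction: it assumes $\mathfrak v_p(\Delta)=\alpha$ is large, applies $M=\operatorname{diag}(1/\sqrt{p},1)$ (resp.\ $1/p^{1/3}$) and obtains $\Delta'=p^{\alpha-30/n}N$; this shows that a model with $\mathfrak v(\Delta)\geq 30/n$ is \emph{not} minimal, i.e.\ it proves the contrapositive of the converse of the lemma as worded. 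You instead prove the implication actually asserted ($\mathfrak v(\Delta)<30/n$ forces minimality) by adding two steps the paper omits: the normalizer argument showing that every substitution preserving the shape $y^2=g(x^n)$ lies in the diagonal/anti-diagonal family, and the integrality observation that a nontrivial integral reduction at $p$ requires $u^{30/n}\mid\Delta$, hence $\mathfrak v_p(\Delta)\geq 30/n$. This buys a logically complete proof of the statement at the cost of one honest restriction, which you flag yourself: minimality is only established among integral models that remain polynomials in $x^n$ (the same restriction under which the paper's subsequent height lemma is proved), whereas the paper's version, read as ``reducible when $\mathfrak v\geq 30/n$,'' makes no such restriction but proves a different claim. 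Your normalizer step is the one place to be slightly careful --- a substitution need only carry one form with the given monomial support to another such form, not literally normalize $\sigma$ --- but for these diagonal automorphisms the conclusion is correct, so I would count this as a valid proof that is more complete than, though computationally identical to, the one in the paper.
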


\begin{proof}
Assume $\Delta (f) = p^\alpha \cdot N$, where $\alpha>30$,  for some prime $p$ and some integer $N$ such that $(p, N)=1$.  Let $M =  \begin{pmatrix} 1/p &  0 \\ 0 & 1\end{pmatrix}$.   Then,  the  discriminant  of the form $f^M$ is  
\[\Delta^\prime = \frac 1 {p^{30} } \, \cdot  p^\alpha \cdot N = p^{\alpha - 30} \cdot N.\]
In the case of hyperelliptic curves with equation $y^2= f(x^2)$  let  $  M =  \begin{pmatrix} \frac{1 }{\sqrt{p} }&  0 \\ 0 & 1\end{pmatrix} $ then
\[\Delta^\prime = \frac 1 {\sqrt{p}^{30} } \, \cdot  p^\alpha \cdot N = p^{\alpha - 15} \cdot N.\]
The same way we can prove it for curves with equation $y^2= f(x^3)$.
\end{proof}

Next we will determine minimal models of genus two curves with extra automorphism. We study only the  loci in $\mathcal M_2$ of dimension $\geq 1$, other cases are obvious.

%****************************************************************************
\section{Minimal models of  curves with extra involutions}

% We say that a genus two curve $\mathcal X$ has \textbf{minimal model over some $\O_F$} % ($F$ an algebraic number field)
In this section we will focus on curves $\mathcal X$ with extra automrphisms.    The following lemma gives a choice for the set of Weierstrass points for curves with extra automorphism;  see \cite{deg2} for the proof.

\begin{lem}\label{Weierstrass-points}
 Let $\mathcal X$ be a genus 2 curve defined over a field $k$ such that $\mbox{ char } k \neq 2$ and $W$ be the set of Weierstrass points.  Then the following hold:

i) If $\Aut (\mathcal X) \iso V_4$, then $W=\{  \pm \alpha, \pm \beta, \pm \frac{ 1}{\alpha \beta} \} $

ii) If $\Aut (\mathcal X) \iso D_4$, then  $W= \left\{ \pm 1, \pm \alpha, \pm \frac 1 \alpha  \right\}$. 

iii) If $\Aut (\mathcal X) \iso D_6$, then $W= \{1, \e_3, \e_3^2, \l, \l \e_3, \l \e_3^2   \}$, where $\l$ is a parameter and $\e_3$ is a primitive third root of unity.  
\end{lem}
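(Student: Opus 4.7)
The plan is to use the fact that the hyperelliptic involution $\iota\colon (x,y)\mapsto (x,-y)$ is central in $\Aut(\mathcal X)$, so the reduced group $\overline G := \Aut(\mathcal X)/\langle \iota\rangle$ acts on $\mathbb{P}^1$ by Möbius transformations and permutes the image $\overline W$ of the Weierstrass set $W$. Since $W$ is identified with its image under $x\colon \mathcal X\to\mathbb P^1$, classifying the possible $W$ reduces to classifying $\overline G$-invariant 6-element subsets of $\mathbb P^1$, after putting $\overline G$ into a standard form by a Möbius change of coordinates on $x$.

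For (i), $\overline G \cong V_4/\langle\iota\rangle \cong C_2$, generated by a single involution with two fixed points. Conjugating by an element of $\mathrm{PGL}_2$, I move these fixed points to $0$ and $\infty$, so the generator becomes $\sigma\colon x\mapsto -x$. Since $\mathcal X$ has $\Aut(\mathcal X)\cong V_4$ (not larger), the six points of $\overline W$ avoid the fixed locus of $\sigma$, hence fall into three $\sigma$-orbits $\{\pm a_1\},\{\pm a_2\},\{\pm a_3\}$. The residual coordinate freedom commuting with $\sigma$ is the scaling $x\mapsto \lambda x$, which I use to normalize $a_1a_2a_3=1$; writing $a_1=\alpha,\ a_2=\beta,\ a_3=1/(\alpha\beta)$ yields the stated form.

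For (ii), $\overline G \cong D_4/\langle\iota\rangle \cong V_4$, so $\overline G$ is generated by two commuting involutions on $\mathbb P^1$. I bring the first to $\sigma\colon x\mapsto -x$ as before; any involution commuting with $\sigma$ and not equal to it has fixed set $\{p,-p\}$ for some $p$, so after a further scaling I normalize to $\tau\colon x\mapsto 1/x$ with fixed points $\pm 1$. The third nontrivial element $\sigma\tau\colon x\mapsto -1/x$ has fixed points $\pm i$. Now $\overline W$ must be stable under both $\sigma$ and $\tau$. Because $\Aut(\mathcal X)\cong D_4$ strictly (not $D_6$ or larger), the orbit structure forces $\overline W$ to contain the $\tau$-fixed pair $\{+1,-1\}$ and otherwise to consist of a generic $V_4$-orbit $\{\pm\alpha,\pm 1/\alpha\}$, giving $W=\{\pm 1,\pm\alpha,\pm 1/\alpha\}$; any remaining scalar freedom is used up by these normalizations.

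For (iii), $\overline G \cong D_6/\langle\iota\rangle \cong S_3$, which on $\mathbb P^1$ admits essentially one faithful representation. I realize it by the order-$3$ rotation $x\mapsto \e_3 x$ (fixed points $0,\infty$) together with $x\mapsto 1/x$; these generate $S_3\subset \mathrm{PGL}_2$. The fixed locus of the $C_3$ on $\mathbb P^1$ is $\{0,\infty\}$, which cannot lie in $\overline W$, so $\overline W$ is a union of $C_3$-orbits of size $3$ — exactly two such orbits, which are $\{1,\e_3,\e_3^2\}\cdot\mu$ for some scalars. Using the scaling $x\mapsto \mu x$ (which normalizes the $C_3$-action), I send one orbit to $\{1,\e_3,\e_3^2\}$, and the other becomes $\{\lambda,\lambda\e_3,\lambda\e_3^2\}$ for the remaining parameter $\lambda$. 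The $S_3$-symmetry $x\mapsto 1/x$ is then automatic on $\overline W$.

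The main obstacle is the normalization step in each case: verifying that the residual coordinate freedom commuting with the chosen generators of $\overline G$ really exhausts all the slack, so that the parameters $(\alpha,\beta)$, $\alpha$, and $\lambda$ are the only moduli. This is a finite check that the stabilizer of the standard $\overline G$-action inside $\mathrm{PGL}_2$ equals the normalizer used for the cleanup, and it is the place where one must be careful to rule out the degenerate loci where $\Aut(\mathcal X)$ jumps to a larger group (e.g.\ $\alpha=\beta$ in (i) forcing $D_4$, or $\alpha^2=-1$ in (ii) forcing $D_6$); excluding these degenerate parameter values is precisely what is needed for the three orbit types above to be disjoint.
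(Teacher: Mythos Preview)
The paper does not prove this lemma; it simply refers the reader to \cite{deg2}. Your approach via the reduced automorphism group $\overline G=\Aut(\mathcal X)/\langle\iota\rangle$ acting on $\mathbb P^1$ is the standard one and is essentially correct, and is almost certainly what is done in the cited reference.

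There is one genuine slip in part (iii). After you rescale by $x\mapsto\mu x$ to send one $C_3$-orbit to $\{1,\e_3,\e_3^2\}$, the involution $x\mapsto 1/x$ is \emph{not} automatic on $\overline W$: it sends $\lambda\mapsto 1/\lambda$, which lies in $W$ only when $\lambda^6=1$. The point is that your scaling normalizes the $C_3$ but not the full $S_3$; it conjugates $x\mapsto 1/x$ to $x\mapsto \mu^2/x$. After your normalization the involution in $\overline G$ is $x\mapsto \lambda/x$ (up to a power of $\e_3$), and \emph{that} map does preserve $W=\{1,\e_3,\e_3^2,\lambda,\lambda\e_3,\lambda\e_3^2\}$. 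This does not affect the conclusion: the stated form of $W$ already follows from $C_3$-invariance together with the scaling normalization, and the hypothesis $\Aut(\mathcal X)\cong D_6$ guarantees the extra involution exists in its $\lambda$-dependent incarnation. You should simply delete or correct the final sentence of (iii).

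A smaller imprecision in (ii): once you have pinned down both $\sigma\colon x\mapsto -x$ and $\tau\colon x\mapsto 1/x$, the size-$2$ orbit inside $\overline W$ could a priori be any of $\{0,\infty\}$, $\{1,-1\}$, $\{i,-i\}$. It is the residual normalizer $N_{\mathrm{PGL}_2}(V_4)/V_4\cong S_3$ (permuting the three involutions) that lets you take it to be $\{1,-1\}$, not ``the orbit structure'' alone. With that adjustment your argument for (ii) is complete.
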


For each of the three cases above we already know how to find an  equation of the curve over its field of moduli as shown in \cite{alg-curves} amongst other places.  In the next theorem we discus integral equations and their reducibility for curves $\mathcal X$ with $\Aut (\mathcal X) \iso V_4$.  % prove that  the equations obtained % Using reduction theory we can determine the minimal model for each case. 

% We compute the Julia quadratic in each of the three cases above by solving Eq.~\eqref{sys-2}.  Next we consider each of these cases. 

%\subsection{The case of $V_4$ group}  In this subsection we will focus on minimal models of curves  $\mathcal X$ with automorphism group $V_4$.   

\begin{thm}\label{aut_V4}
Let ${\mathfrak p} \in \mathcal M_2 (\Q)$ be such that $\Aut({\mathfrak p}) \iso V_4$. There is a genus 2 curve  $\mathcal X$ corresponding to $\mathfrak p$ with equation 
$y^2\, z^4 =f(x^2, z^2)$,  where 
\begin{equation}\label{eq_form}
 f(x, z)= x^6 - s_1 x^4 z^2 + s_2 x^2 z^4 - z^6. 
\end{equation} 
If $f \in \Z [x, z]$, then $f(x, z)$ or $f(-z, x)$ is a reduced binary form. 
%and $\Delta_{f(x, z) }$ is minimal over $\Z$, then $\mathcal X$ is reduced in its orbit. 
%Then there exists a representative curve $\mathcal X: y^r z^{n-r}= f(x, z)$ of ${\mathfrak p}$  such that $f$ is a totally real form. Moreover, 
\end{thm}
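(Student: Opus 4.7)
The plan is to exploit the symmetry of $F(x,z):=x^6 - s_1 x^4 z^2 + s_2 x^2 z^4 - z^6$ under the involution $x \mapsto -x$. For the diagonal matrix $M_0 = \mathrm{diag}(-1,1) \in GL_2(\Z)$, a direct computation gives $F^{M_0}(x,z) = F(-x,z) = F(x,z)$. Applying the covariance of $\J$ from Lemma~\ref{theta-invariant} to this fixed point yields $\J_F(-x,z) = \lambda\, \J_F(x,z)$ for some $\lambda = \pm 1$. Writing $\J_F(x,z) = Ax^2 + Bxz + Cz^2$ and using that $\J_F$ is positive definite (so $A, C > 0$), the case $\lambda = -1$ would force $A = C = 0$ and is thus impossible; hence $\lambda = 1$ and $B = 0$, giving $\J_F(x,z) = Ax^2 + Cz^2$ with $A, C > 0$.

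It follows immediately that the zero map produces $\zeta(F) = i\sqrt{C/A}$, a purely imaginary point of $\H_2$. The strip condition $|\mathrm{Re}(\zeta(F))| \le 1/2$ is therefore automatic, so $F$ is reduced precisely when $|\zeta(F)| \ge 1$, i.e.\ when $C \ge A$. If $C \ge A$, then $F$ itself lies in $\mathcal F_2$ and we are done.

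If instead $C < A$, I would invoke the modular matrix $M_S = \begin{pmatrix} 0 & -1 \\ 1 & 0 \end{pmatrix} \in SL_2(\Z)$. A direct substitution gives $F^{M_S}(x,z) = F(-z,x)$, and the $\G$-equivariance $\zeta(f^M) = \zeta(f)^{M^{-1}}$ recalled in the excerpt yields $\zeta(F^{M_S}) = -1/\zeta(F) = i\sqrt{A/C}$. This point is still on the imaginary axis and now has modulus $\sqrt{A/C} > 1$, so $F(-z,x) \in \mathcal F_2$ is reduced. The two cases $C \ge A$ and $C < A$ thus match the disjunction in the statement.

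The main obstacle I anticipate is the precise weight convention in the covariance identity $\J_{F^M} = (\det M)^w \J_F^M$, which is not spelled out in Lemma~\ref{theta-invariant}. Fortunately the argument is insensitive to this: for the involution $M_0$ with $\det M_0 = -1$, the scalar $\lambda = (\det M_0)^{\pm w}$ is $\pm 1$ in any event, and positive definiteness immediately rules out the $\lambda = -1$ branch, so the vanishing of $B$ (and hence the rest of the proof) goes through independently of the chosen convention.
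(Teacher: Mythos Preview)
Your argument for the reduction claim is correct and is in fact cleaner than the paper's own proof. The paper proceeds by a case-by-case geometric analysis of the possible configurations of the Weierstrass points $\{\pm\alpha,\pm\beta,\pm 1/(\alpha\beta)\}$ (both purely imaginary, one purely imaginary and one real, generic complex conjugate pair, both real), invoking pictures in each case to argue that $\zeta(F)$ lands on the imaginary axis; it then finishes exactly as you do, with the dichotomy $|\zeta(F)|\ge 1$ versus applying $S\colon \tau\mapsto -1/\tau$. Your approach bypasses the casework entirely by reading the conclusion off the $GL_2$-covariance of $\J$ (Lemma~\ref{theta-invariant}) together with the trivial observation that $F$ is fixed by $x\mapsto -x$. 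This is precisely the shortcut the paper records in the Remark immediately following its proof, attributed to M.~Stoll: since the coefficients $s_1,s_2$ are fixed by the extra involution, so is $\zeta(F)$, hence $\zeta(F)$ is purely imaginary. What your route buys is uniformity---no root configurations, no figures---and a proof that visibly does not use the integrality hypothesis $s_1,s_2\in\Z$ (which, in truth, the paper's argument does not use either).

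One small omission: the theorem also asserts the existence of a model $y^2z^4 = x^6 - s_1x^4z^2 + s_2x^2z^4 - z^6$ for any $\mathfrak p$ with $\Aut(\mathfrak p)\cong V_4$, and you do not address this. In the paper it is a one-line consequence of Lemma~\ref{Weierstrass-points}: expand the product $\prod_{\gamma}(x^2-\gamma^2)$ over $\gamma\in\{\alpha,\beta,1/(\alpha\beta)\}$. You should at least cite that lemma before turning to the reduction argument.
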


\begin{proof}
From Lem.~\ref{Weierstrass-points} we have that the set of Weierstrass points for such curves  
is $W=\{  \pm \alpha, \pm \beta, \pm \frac{ 1}{\alpha \beta}  \} $.  The affine equation of the corresponding curve is
\begin{equation}\label{eq_1}
 y^2 = (x^2 - \alpha^2) (x^2- \beta^2) \left( x^2 -  \frac 1 {(\alpha \beta )^2} \right). \end{equation}
Hence, 
\[ y^2\, z^4= x^6 - s_1 x^4 z^2 + s_2 x^2 z^4 - z^6. \]
where $s_1$ and $s_2$ are the symmetric polynomials.  
%\begin{equation}
% s_1 = \alpha^2 + \beta^2 + \frac 1 { (\alpha \beta )^2} , \;  \;  s_2 = ( \alpha \beta)^2 + \frac 1 { \alpha^2}   + \frac 1 { \beta^2}  
%\end{equation} 
This proves the first part.

We assume now that $s_1, s_2 \in \Z$.
% and let $f(x, z)$ be   $ f(x, z) := x^6 - s_1 x^4 z^2 + s_2 x^2 z^4 - z^6. $
%
Then, if $\alpha$ is a non-real root so is its conjugate $\bar \alpha$.   Suppose   that  $\alpha $ and $\beta$ are both  purely complex. Then,  $\frac{1 }{\alpha\beta}$ is real.  Geometrically this case is illustrated in Fig. \eqref{fig1}.
\begin{figure}[h!]
\centering
\includegraphics[width=4cm, height=4.5cm]{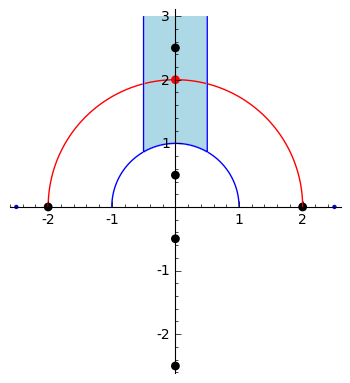}
\caption{The image of the zero map $\e (f)$ is the red dot}
\label{fig1}
\end{figure}

We are denoting with black dots the roots of the form.  In this case  the zero map $\e (f)$ will be in the "middle" of the half semicircle connecting $\pm \frac{1 }{\alpha \beta}$. Since, they are symmetric with respect to the y-axis this obviously will be in the y-axis, i.e. $\e (f)$ is purely complex. %, represented by the red dot in the following figure. 
Next, assume  $\alpha $  is purely complex and $\beta$ is real.  Then $\frac {1}{\alpha \beta}$ is purely complex and the proof is as above.

Now, let us assume that  $\alpha$ and $\beta$ are complex roots with real and imaginary part nonzero.  Then, $\beta = \bar \alpha$ and  the set of Weierstrass points for the curve  is $\{ \pm \alpha, \pm \bar \alpha, \pm \frac 1 {\alpha \bar \alpha} \}$.     Then,   the centroid of the  rectangle  with vertices$\{ \pm \alpha, \pm \bar \alpha\}$  is  the origin $O$. Finding the zero map $\e (f)$ is equivalent to finding   the "middle" of the half semicircle connecting  the real roots $\pm \frac{1 }{\alpha \beta}$ which will be a point in the y-axis,  illustrated in Fig. \eqref{fig2}. 
\begin{figure}[h!]
\centering
\includegraphics[width=6cm, height=4cm]{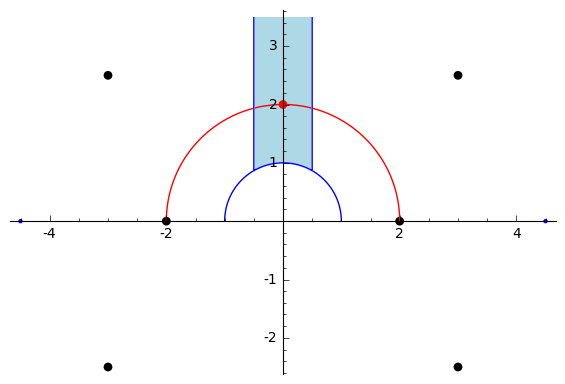}
\caption{The image of the zero map $\e (f)$ is the red dot}
\label{fig2}
\end{figure}

%\begin{figure}[h!]
%\centering
%\includegraphics[width=7cm, height=4cm]{fig1}
%\caption{ }%
%\label{fig3}
%\end{figure}

Above we proved that  $\e (F)$   is purely complex, i.e. $\e (F) = c i $ for some  $c\in \R^+$.  Then $f$ is reduced if and only if  $c \geq 1$.  Assume $f$ is not reduced.  Then,  there exists a binary form $g(x, z)$ such that  $g(x, z) = f(-z,x) = f(z, x)$. The form $g(x, z)$ will be reduced since $\e (g) = - \frac 1 {\e (f)} = \frac i c$. Hence,  either $f$ or $g$ (or possibly both, when $c=1$) will be reduced.

Lastly, if $\alpha$ and $\beta$ are both real the form  $f$ is totally real.  In \cite[Prop. 6.1 ]{beshaj-thesis} we proved that a superelliptic curve with such Weierstrass points is reduced in its orbit.   This completes the proof. 
\end{proof}

%***********

\begin{rem} After this proof was completed M. Stoll pointed out that since $\e (f)$  must be fixed by the extra involution $\sigma: (x, y) \to (-x, y)$. 
Notice  that $\e(f)$ is uniquely determined by the coefficients $s_1, s_2$.  Such coefficients are fixed by the extra involution $\sigma$.  Hence, $\e (f)$ is also fixed by such involution.  Thus,  $\e (f)$  is  purely complex.
\end{rem}

Notice that in general a binary form being reduced doesn't necessarily mean that it has minimal height; see \cite{reduction} for details.   And an integral model of the form given as in Eq.~\eqref{eq_form} is not necessarily of minimal height among all integral models. But it is  of minimal height among integral models defined by polynomials in $x^2$. This is proved in the following lemma.

\begin{lem} Let $\mathcal X$  be  a genus 2 curve    with $\Aut({\mathfrak p}) \iso V_4$ and  equation 
$y^2 =f(x^2)$,  where 
\begin{equation}\label{EQ_4}
 f(x)= x^6 - s_1 x^4  + s_2 x^2 - 1. 
\end{equation} 
Then, integral models of this form have minimal height  among integral models defined by polynomials in $x^2$, even up to twist. 
\end{lem}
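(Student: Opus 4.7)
The plan is to parametrize all integer models with polynomial-in-$x^2$ structure that are twists of $\mathcal X$, extract two twist invariants from the coefficients, and then argue by integrality. A general such model has the shape
$y^2 = A x^6 + B x^4 + C x^2 + D$
with $A, B, C, D \in \Z$ and $A D \neq 0$ (needed for smoothness in genus two). The subgroup of $\mathrm{GL}_2(\C) \times \C^*$ that preserves the structure ``polynomial in $x^2$'' (equivalently, that normalizes the extra involution $(x, y) \mapsto (-x, y)$) is generated by the diagonal substitutions $(x, z, y) \mapsto (\alpha x, \beta z, \nu y)$ and the anti-diagonal swap $(x, z) \mapsto (\beta z, \alpha x)$, together with rescalings of $y$.

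A direct computation shows that the rational expressions $I_1 := B^3 / (A^2 D)$ and $I_2 := C^3/(A D^2)$ are invariant under the diagonal action, while the anti-diagonal swap exchanges $I_1 \leftrightarrow I_2$. Hence the unordered pair $\{I_1, I_2\}$ is a complete invariant of the twist class inside this family. For the model in \eqref{EQ_4}, $(A, B, C, D) = (1, -s_1, s_2, -1)$ and $\{I_1, I_2\} = \{s_1^3, s_2^3\}$.

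Now let $y^2 = A x^6 + B x^4 + C x^2 + D$ be any other integral twist. After possibly applying the swap we may assume $B^3 = s_1^3 A^2 D$ and $C^3 = s_2^3 A D^2$ in $\Z$. Taking absolute values and cube roots yields $|B| = |s_1|\,(A^2 |D|)^{1/3}$ and $|C| = |s_2|\,(|A| D^2)^{1/3}$. Since $A, D$ are nonzero integers, $A^2 |D| \geq 1$ and $|A| D^2 \geq 1$, so $|B| \geq |s_1|$ and $|C| \geq |s_2|$; combined with $|A|, |D| \geq 1$ this gives $\max(|A|, |B|, |C|, |D|) \geq \max(1, |s_1|, |s_2|)$, which is exactly the height of the model in \eqref{EQ_4}.

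The main obstacle is justifying that every $\C$-isomorphism between two polynomial-in-$x^2$ models is realized by an element of the group of transformations above, so that $\{I_1, I_2\}$ is a genuinely complete twist invariant within this family. This should follow because any isomorphism must conjugate the extra involution of one model to that of the other, and both involutions are $(x, y) \mapsto (-x, y)$; thus the isomorphism lies in the normalizer of $\mathrm{diag}(-1, 1)$ in $\mathrm{GL}_2$, which is precisely the group of diagonal and anti-diagonal matrices used above. Once this normalizer claim is in place, the rest of the argument reduces to the elementary integrality chain just described.
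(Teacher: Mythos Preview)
Your argument is correct and is essentially the same strategy as the paper's: both show that in any integral twist $y^2=Ax^6+Bx^4+Cx^2+D$ one has $|A|,|D|\geq 1$, and then deduce $|B|\geq |s_1|$ and $|C|\geq |s_2|$ from the convexity of the exponents. The paper parametrizes the twist directly as $(\lambda^6\mu^2,\ \lambda^4\mu^2 s_1,\ \lambda^2\mu^2 s_2,\ \mu^2)$ and checks prime by prime that each valuation jump $6v+2w,\,4v+2w,\,2v+2w,\,2w$ is nonnegative, whereas you package the same computation into the invariants $I_1=B^3/(A^2D)$, $I_2=C^3/(AD^2)$ and argue with absolute values; your version is also a bit more complete in that you treat the anti-diagonal swap and justify, via the normalizer of $\mathrm{diag}(-1,1)$, why no other transformations can occur---points the paper leaves implicit.
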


\begin{proof}Let  $f(x)$ be integral given as in Eq.~\eqref{EQ_4}  and consider
another $\Q$-isomorphic integral model that is a polynomial in $x^2$. Any such model has coefficient vector as follows
 \[
( \lambda^6 \mu^2, \lambda^4 \mu^2 s_1, \lambda^2 \mu^2 s_2, \mu^2 )\]
for some rational $\lambda$ and $\mu$. Now, let us proceed prime by prime. Let $v$ be the valuation of $\lambda$ and let $w$ be that of
$\mu$. For the equation to have smaller height, we would need that one of the valuation
jumps
\[( 6 v + 2 w, 4 v + 2 w, 2 v + 2 w, 2 w )\]
to be  negative. But because the new model is also integral, we have $2 w \geq 0$ and $6 v + 2
w  \geq 0$. However, if those two inequalities hold, then all the jumps are
positive, so $4 v + 2 w$ and $2 v + 2 w$ as well. Therefore, our   integral model of  the form
 \begin{equation} 
 f(x)= x^6 - s_1 x^4  + s_2 x^2 - 1. 
\end{equation} 
has minimal height  among integral models defined by polynomials in $x^2$. 
%And what is nicest of all is that we did not use minimality of the discriminant!
\end{proof}

Note that lots of curves with geometric automorphism group $V_4$ and field of moduli $\Q$ do not admit a model over $\Q$ defined by a polynomial in $x^2$.  All of them descend, and in fact they even all descend to a hyperelliptic model instead of a cover of a conic. But they do not all admit that special form as given in Eq.~\eqref{eq_1}.  

The natural question is what are other additional conditions could force $f(x, z)$, where $f(x, z)$ represents  the equation of a curve with extra automorphism, to be of minimal absolute height?  We will elaborate more on this question in the next section. 
%  Some more facts about minimal models of curves with extra automorphism are discussed in the next section. 
%(iv) Watch out; you really need integrality, otherwise the statement is not true. Take for example a case where a and b are big and divisible by 4; that can even happen if the discriminant is minimal

%************************************************
\section{Some heuristics for curves with extra involutions defined over $\Q$}

In \cite{alg-curves}  we display  a database of  genus 2 curves defined over $\Q$. The curves in the database are ordered based on their minimal absolute height, therefore they provide a perfect case for us to check how many of our curves are in that database.  In addition for each isomorphism class is given  a minimal equation over the field of moduli, the automorphism group,  and all the  twists.  All the computations involved in the database  are done based on the absolute  invariants $i_1, i_2, i_3$; see \cite{alg-curves} for details.   The database is  explained in more details in  \cite{alg-curves}. 

We have added to that  database  even the curves discussed here defined over $\Q$.  We have found all such curves of height $\h \leq 101$ defined over $\Q$.  The number of such curves for each height $\h$ is displayed in the following Table~\ref{table-1}. 

In the first column is the height of the curve, the second column contains the number of tuples $(1, 0, a, 0, b, 0, 1)$ which gives a genus 2 curves (i.e. $J_{10}\neq 0$). Not all such tuples give a new moduli point.   In the third column it is the number of such moduli points.  The fourth and fifth column contain the number of curves with automorphism group $D_4$ and $D_6$, and the last column contains the number of all points in the moduli space of height $\leq h$. 

\begin{small}

\begin{table}[b]%[htp!]
\caption{Number of curves with height $\h \leq 100$}
\begin{center}
\begin{tabular}{|c|c|c|c|c|c|| c|c|c|c|c|c| }
\hline
%$\h$ &                   &  new       pts.                & $D_4$   & $D_6$ & Total pts       \\
$\h$  & $J_{10}\neq 0$  &  in $\mathcal M_2$ & $D_4$ & $D_6$ & \# pts  &  $\h$  & $J_{10}\neq 0$  &  in $\mathcal M_2$  &   $D_4$     &   $D_6$   &  \# pts  \\
\hline
1 & 8 & 5 & 1 & 0 & 5  & 51 & 10607 & 205 & 2 & 0 & 5347 \\
2 & 24 & 9 & 2 & 0 & 14  & 52 & 11023 & 209 & 2 & 0 & 5556 \\
3 & 47 & 12 & 1 & 0 & 26 & 53 & 11447 & 213 & 2 & 0 & 5769 \\
4 & 79 & 17 & 2 & 0 & 43 & 54 & 11879 & 217 & 2 & 0 & 5986 \\
5 & 119 & 20 & 1 & 0 & 63 & 55 & 12319 & 221 & 2 & 0 & 6207 \\
6 & 167 & 25 & 2 & 0 & 88 & 56 & 12767 & 225 & 2 & 0 & 6432 \\
7 & 223 & 28 & 1 & 0 & 116 & 57 & 13223 & 229 & 2 & 0 & 6661 \\
8 & 287 & 33 & 2 & 0 & 149 & 58 & 13687 & 233 & 2 & 0 & 6894 \\
9 & 359 & 36 & 1 & 0 & 185 & 59 & 14159 & 237 & 2 & 0 & 7131 \\
10 & 439 & 41 & 2 & 0 & 226 & 60 & 14639 & 241 & 2 & 0 & 7372 \\
11 & 527 & 45 & 2 & 0 & 271 & 61 & 15127 & 245 & 2 & 0 & 7617 \\
12 & 623 & 49 & 2 & 0 & 320 & 62 & 15623 & 249 & 2 & 0 & 7866 \\
13 & 727 & 53 & 2 & 0 & 373 & 63 & 16127 & 253 & 2 & 0 & 8119 \\
14 & 839 & 57 & 2 & 0 & 430 & 64 & 16639 & 257 & 2 & 0 & 8376 \\
15 & 959 & 58 & 1 & 0 & 488 & 65 & 17159 & 261 & 2 & 0 & 8637 \\
16 & 1087 & 65 & 2 & 0 & 553 & 66 & 17687 & 265 & 2 & 0 & 8902 \\
17 & 1223 & 68 & 1 & 0 & 621 & 67 & 18223 & 269 & 2 & 0 & 9171 \\
18 & 1367 & 73 & 2 & 0 & 694 & 68 & 18767 & 273 & 2 & 0 & 9444 \\
19 & 1519 & 77 & 2 & 0 & 771 & 69 & 19319 & 277 & 2 & 0 & 9721 \\
20 & 1679 & 81 & 2 & 0 & 852 & 70 & 19879 & 281 & 2 & 0 & 10002 \\
21 & 1847 & 85 & 2 & 0 & 937 & 71 & 20447 & 285 & 2 & 0 & 10287 \\
22 & 2023 & 89 & 2 & 0 & 1026 & 72 & 21023 & 289 & 2 & 0 & 10576 \\
23 & 2207 & 93 & 2 & 0 & 1119 & 73 & 21607 & 293 & 2 & 0 & 10869 \\
24 & 2399 & 97 & 2 & 0 & 1216 & 74 & 22199 & 297 & 2 & 0 & 11166 \\
25 & 2599 & 101 & 2 & 0 & 1317 & 75 & 22799 & 301 & 2 & 0 & 11467 \\
26 & 2807 & 105 & 2 & 0 & 1422 & 76 & 23407 & 305 & 2 & 0 & 11772 \\
27 & 3023 & 109 & 2 & 0 & 1531 & 77 & 24023 & 309 & 2 & 0 & 12081 \\
28 & 3247 & 113 & 2 & 0 & 1644 & 78 & 24647 & 313 & 2 & 0 & 12394 \\
\hline 
\end{tabular}
\end{center}
\label{table-1}
\end{table}%

\begin{table}[t]
%\caption{Number of curves with height $\h \leq 100$}
\begin{center}
\begin{tabular}{|c|c|c|c|c|c|| c|c|c|c|c|c| }
\hline
%$\h$ &                   &  new       pts.                & $D_4$   & $D_6$ & Total pts       \\
$\h$  & $J_{10}\neq 0$  &  in $\mathcal M_2$ & $D_4$ & $D_6$ & \# pts  &  $\h$  & $J_{10}\neq 0$  &  in $\mathcal M_2$  &   $D_4$     &   $D_6$   &  \# pts  \\
\hline
29 & 3479 & 117 & 2 & 0 & 1761 & 79 & 25279 & 317 & 2 & 1 & 12711 \\
30 & 3719 & 121 & 2 & 0 & 1882 & 80 & 25919 & 321 & 2 & 0 & 13032 \\
31 & 3967 & 125 & 2 & 0 & 2007 & 81 & 26567 & 325 & 2 & 0 & 13357 \\
32 & 4223 & 129 & 2 & 0 & 2136 & 82 & 27223 & 329 & 2 & 0 & 13686 \\
33 & 4487 & 133 & 2 & 0 & 2269 &  83 & 27887 & 333 & 2 & 1 & 14019 \\
34 & 4759 & 137 & 2 & 0 & 2406 &  84 & 28559 & 337 & 2 & 0 & 14356 \\
35 & 5039 & 141 & 2 & 0 & 2547 & 85 & 29239 & 341 & 2 & 0 & 14697 \\
36 & 5327 & 145 & 2 & 0 & 2692 &  86 & 29927 & 345 & 2 & 0 & 15042 \\
37 & 5623 & 149 & 2 & 0 & 2841 & 87 & 30623 & 349 & 2 & 0 & 15391 \\
38 & 5927 & 153 & 2 & 0 & 2994 & 88 & 31327 & 353 & 2 & 0 & 15744 \\
39 & 6239 & 157 & 2 & 0 & 3151 & 89 & 32039 & 357 & 2 & 0 & 16101 \\
40 & 6559 & 161 & 2 & 0 & 3312 & 90 & 32759 & 361 & 2 & 0 & 16462 \\
41 & 6887 & 165 & 2 & 0 & 3477 & 91 & 33487 & 365 & 2 & 0 & 16827 \\
42 & 7223 & 169 & 2 & 0 & 3646 & 92 & 34223 & 369 & 2 & 0 & 17196 \\
43 & 7567 & 173 & 2 & 0 & 3819 & 93 & 34967 & 373 & 2 & 0 & 17569 \\
44 & 7919 & 177 & 2 & 0 & 3996 & 94 & 35719 & 377 & 2 & 0 & 17946 \\
45 & 8279 & 181 & 2 & 0 & 4177 & 95 & 36479 & 381 & 2 & 0 & 18327 \\
46 & 8647 & 185 & 2 & 0 & 4362 & 96 & 37247 & 385 & 2 & 0 & 18712 \\
47 & 9023 & 189 & 2 & 0 & 4551 & 97 & 38023 & 389 & 2 & 0 & 19101 \\
48 & 9407 & 193 & 2 & 0 & 4744 & 98 & 38807 & 393 & 2 & 0 & 19494 \\
49 & 9799 & 197 & 2 & 0 & 4941 & 99 & 39599 & 397 & 2 & 0 & 19891 \\
50 & 10199 & 201 & 2 & 0 & 5142 & 100 & 40399 & 401 & 2 & 0 & 20292 \\
\hline 
\end{tabular}
\end{center}
%\label{table-1}
\end{table}%

\end{small}

%*******************

Some interesting questions that can be addressed analyzing Table~~\ref{table-1} are as follows.  How many genus two curves with extra involutions  are there with a fixed height $\h$? How many  isomorphism classes of genus two curves with extra involutions  are there for a fixed height $\h$? In other words, how many twists are for such curves with fixed  height? We intend to further explore some of these questions in further work.   

The main question that comes from the previous section was how many of these curves are of minimal absolute height. From 14523 = 20292 - 5769  binary forms of the form given in Eq.~\eqref{eq_form} we check how many of them have minimal absolute height $\leq 3$ even though $r := \min \{ | s_1 |, | s_2 | \} > 3$. Out of 14523 forms only   for  57 of them 
\[ r = \max \{ | s_1 |, | s_2 | \} \neq \h. \]
We display all such forms in the Table~~\ref{table-2}.  In the third column is the equation of the curve given the 7-tuple $(a_0, \dots , a_6)$ corresponding to the equation  
\[ y^2=\sum_{i=0}^6 a_i x^i = x^6 - s_1 x^4 + s_2 x^2 +1. \]  
 In the fifth column is the twist with height  $\h \leq 4$ which is isomorphic over $\overline \Q$ with the corresponding curve in the first column.  In the last column is given the automorphism group of the curve over $\overline \Q$.   

\begin{small}
\begin{table}%[b]
\caption{Curves which have twists with height $\leq 4$}
\begin{center}
\begin{tabular}{|c|c|c|c|c|c|}
\hline
%                           &                   &         \\
\hline
\# & $r$ & $(1, 0, s_1, 0, s_2, 0, 1)$ & $\h$ & $(a_0, \dots , a_6)$ & $\mbox{Aut} (\mathfrak p)$  \\
\hline
1 & 7 & [1, 0, 1, 0, -7, 0, 1] & 3 & [1, -3, -1, -2, -1, -3, 1] & [4, 2]  \\ 
2 & 5 & [1, 0, 5, 0, 1, 0, 1] & 3 & [1, -1, 3, 2, 3, -1, 1] & [4, 2]  \\ 
3 & 17 & [1, 0, 15, 0, -17, 0, 1] & 2 & [1, -1/2, -1, -1, -1, -1/2, 1] & [4, 2]  \\ 
4 & 29 & [1, 0, -29, 0, -29, 0, 1] & 3 & [0, 1, 0, -3/2, 0, 1] & [8, 3]  \\ 
5 & 9 & [1, 0, 9, 0, 5, 0, 1] & 2 & [1, -1/2, 1, 1, 1, -1/2, 1] & [4, 2]  \\ 
6 & 41 & [1, 0, -25, 0, -41, 0, 1] & 3 & [1, -1/2, -3/2, 1, -3/2, -1/2, 1] & [4, 2]  \\ 
7 & 13 & [1, 0, 3, 0, -13, 0, 1] & 2 & [1, -2, -1, 0, -1, -2, 1] & [4, 2]  \\ 
8 & 51 & [1, 0, 51, 0, -45, 0, 1] & 3 & [1, 0, -1, -2/3, -1, 0, 1] & [4, 2]  \\ 
9 & 9 & [1, 0, 7, 0, -9, 0, 1] & 2 & [1, -1, -1, -2, -1, -1, 1] & [4, 2]  \\ 
10 & 19 & [1, 0, 19, 0, -13, 0, 1] & 2 & [1, 0, -1, -2, -1, 0, 1] & [4, 2]  \\ 
11 & 61 & [1, 0, 35, 0, -61, 0, 1] & 3 & [1, -2/3, -1, 2/3, -1, -2/3, 1] & [4, 2]  \\ 
12 & 7 & [1, 0, -7, 0, -7, 0, 1] & 4 & [1, 4, -3, 0, -3, -4, 1] & [8, 3]  \\ 
13 & 61 & [1, 0, 3, 0, -61, 0, 1] & 3 & [1, -2, -1, 3, -1, -2, 1] & [4, 2]  \\ 
14 & 17 & [1, 0, -1, 0, -17, 0, 1] & 3 & [1, -3, -1, 2, -1, -3, 1] & [4, 2]  \\ 
15 & 6 & [1, 0, 6, 0, 6, 0, 1] & 2 & [-1, 1, 1/2, 0, -1/2, -1, 1] & [8, 3]  \\ 
16 & 13 & [1, 0, -5, 0, -13, 0, 1] & 3 & [1, -1, -3, 2, -3, -1, 1] & [4, 2]  \\ 
17 & 29 & [1, 0, 19, 0, -29, 0, 1] & 3 & [1, -2/3, -1, 0, -1, -2/3, 1] & [4, 2]  \\ 
18 & 19 & [1, 0, 19, 0, 19, 0, 1] & 3 & [0, 1, 0, -3, 0, 1] & [8, 3]  \\ 
19 & 5 & [1, 0, -5, 0, -5, 0, 1] & 1 & [0, -1, 0, 0, 0, 1] & [48, 5]  \\ 
20 & 47 & [1, 0, 47, 0, 47, 0, 1] & 3 & [1, 0, -2/3, 0, -2/3, 0, 1] & [8, 3]  \\ 
21 & 39 & [1, 0, 39, 0, 23, 0, 1] & 2 & [1, -1/2, -1/2, 1, -1/2, -1/2, 1] & [4, 2]  \\ 
22 & 5 & [1, 0, 3, 0, -5, 0, 1] & 2 & [0, 1, -2, -2, -2, 1] & [2, 1]  \\ 
23 & 8 & [1, 0, 8, 0, 8, 0, 1] & 3 & [1, 1, 3/2, 0, 3/2, -1, 1] & [8, 3]  \\ 
24 & 19 & [1, 0, 19, 0, 11, 0, 1] & 2 & [1, -1/2, 0, 1, 0, -1/2, 1] & [4, 2]  \\ 
25 & 21 & [1, 0, -5, 0, -21, 0, 1] & 4 & [1, -4, -1, 4, -1, -4, 1] & [4, 2]  \\ 
26 & 53 & [1, 0, 43, 0, -53, 0, 1] & 3 & [1, -1/3, -1, 0, -1, -1/3, 1] & [4, 2]  \\ 
27 & 37 & [1, 0, 27, 0, -37, 0, 1] & 2 & [1, -1/2, -1, 0, -1, -1/2, 1] & [4, 2]  \\ 
28 & 35 & [1, 0, 35, 0, -29, 0, 1] & 1 & [1, 0, -1, -1, -1, 0, 1] & [4, 2]  \\ 
29 & 93 & [1, 0, 35, 0, -93, 0, 1] & 3 & [1, -1, -1, 3/2, -1, -1, 1] & [4, 2]  \\ 
30 & 21 & [1, 0, 11, 0, -21, 0, 1] & 1 & [1, -1, -1, 0, -1, -1, 1] & [4, 2]  \\ 
31 & 55 & [1, 0, 55, 0, 39, 0, 1] & 3 & [1, -1/3, -2/3, 2/3, -2/3, -1/3, 1] & [4, 2]  \\ 
32 & 77 & [1, 0, 51, 0, -77, 0, 1] & 2 & [1, -1/2, -1, 1/2, -1, -1/2, 1] & [4, 2]  \\ 
33 & 9 & [1, 0, -1, 0, -9, 0, 1] & 4 & [1, -4, -1, 0, -1, -4, 1] & [4, 2]  \\ 
34 & 37 & [1, 0, -37, 0, -37, 0, 1] & 3 & [1, 3/2, -1, 0, -1, -3/2, 1] & [8, 3]  \\ 
35 & 11 & [1, 0, 11, 0, 11, 0, 1] & 3 & [1, 2, 3, 0, 3, -2, 1] & [8, 3]  \\ 
36 & 29 & [1, 0, 3, 0, -29, 0, 1] & 2 & [1, -2, -1, 2, -1, -2, 1] & [4, 2]  \\ 
37 & 23 & [1, 0, 23, 0, 23, 0, 1] & 3 & [1, 0, -1/3, 0, -1/3, 0, 1] & [8, 3]  \\ 
38 & 11 & [1, 0, 5, 0, -11, 0, 1] & 3 & [1, -3/2, -1, -1, -1, -3/2, 1] & [4, 2]  \\ 
39 & 11 & [1, 0, 11, 0, 3, 0, 1] & 2 & [1, -1, 1, 2, 1, -1, 1] & [4, 2]  \\ 
40 & 25 & [1, 0, -9, 0, -25, 0, 1] & 2 & [1, -1, -2, 2, -2, -1, 1] & [4, 2]  \\ 
41 & 99 & [1, 0, 99, 0, -93, 0, 1] & 3 & [1, 0, -1, -1/3, -1, 0, 1] & [4, 2]  \\ 
42 & 15 & [1, 0, 9, 0, -15, 0, 1] & 3 & [1, -1, -1, -2/3, -1, -1, 1] & [4, 2]  \\ 
43 & 23 & [1, 0, 23, 0, 7, 0, 1] & 2 & [1, -1, 0, 2, 0, -1, 1] & [4, 2]  \\ 
44 & 45 & [1, 0, 19, 0, -45, 0, 1] & 1 & [1, -1, -1, 1, -1, -1, 1] & [4, 2]  \\ 
45 & 31 & [1, 0, 31, 0, 31, 0, 1] & 2 & [1, 0, -1/2, 0, -1/2, 0, 1] & [8, 3]  \\ 
46 & 9 & [1, 0, 9, 0, 9, 0, 1] & 3 & [-1, -3, -3, -2, 3, -3, 1] & [8, 3]  \\ 
47 & 25 & [1, 0, 7, 0, -25, 0, 1] & 3 & [1, -3/2, -1, 1, -1, -3/2, 1] & [4, 2]  \\ 
48 & 83 & [1, 0, 83, 0, -45, 0, 1] & 3 & [1, -1/2, -1, 3/2, -1, -1/2, 1] & [4, 2]  \\ 
49 & 13 & [1, 0, 13, 0, 9, 0, 1] & 3 & [1, -1/3, 1/3, 2/3, 1/3, -1/3, 1] & [4, 2]  \\ 
\hline 
\end{tabular}
\end{center}
\label{table-2}
\end{table}%

%\begin{table}[t]
%\caption{(continue)}
\begin{center}
\begin{tabular}{|c|c|c|c|c|c|}
\hline
%                           &                   &         \\
\hline
%
%\# & $r$ & $(1, 0, s_1, 0, s_2, 0, 1)$ & $\h$ & $(a_0, \dots , a_6)$ & $\mbox{Aut} (\mathfrak p)$  \\
%
%\hline
%
50 & 25 & [1, 0, 23, 0, -25, 0, 1] & 3 & [1, -1/3, -1, -2/3, -1, -1/3, 1] & [4, 2]  \\ 
51 & 33 & [1, 0, 15, 0, -33, 0, 1] & 3 & [1, -1, -1, 2/3, -1, -1, 1] & [4, 2]  \\ 
52 & 27 & [1, 0, 27, 0, 19, 0, 1] & 3 & [1, -1/3, -1/3, 2/3, -1/3, -1/3, 1] & [4, 2]  \\ 
53 & 51 & [1, 0, 51, 0, -13, 0, 1] & 3 & [1, -1, -1, 3, -1, -1, 1] & [4, 2]  \\ 
54 & 13 & [1, 0, -13, 0, -13, 0, 1] & 1 & [0, 1, 0, -1, 0, 1] & [8, 3]  \\ 
55 & 27 & [1, 0, 27, 0, 27, 0, 1] & 3 & [1, 3, 3, 0, 3, -3, 1] & [8, 3]  \\ 
56 & 67 & [1, 0, 67, 0, -61, 0, 1] & 2 & [1, 0, -1, -1/2, -1, 0, 1] & [4, 2]  \\ 
57 & 33 & [1, 0, -33, 0, -33, 0, 1] & 3 & [1, 0, -3/2, 0, -3/2, 0, 1] & [8, 3]  \\ 
\hline 
\end{tabular}
\end{center}
%\end{table}%

\end{small}

\bigskip

There are a few questions which arise from Table~\ref{table-2}. First, can the curves of column five be obtained from reducing curves of column three? Secondly, are they in the same $\Gamma$-orbit as the curves from column 2? 

%Second, why don't these curves have minimal height?  We expect that reduced implies minimal height, at least in most cases?  

In response to this question, we found that twenty of the  curves displayed in Table~\ref{table-2}  can be reduced further using the reduction algorithm. They are displayed in Table~\ref{table-3}.  In the second column is displayed the curve from Table~\ref{table-2}, in the third column the curve obtained by the reduction algorithm and the last column the automorphism group of the curve.  Some of the reduced curves are isomorphic to the original curves over $\Q$.% and an interesting fact to notice is that in all this cases $| \det M |=2$,  where $M$ is the isomorphism matrix. 

\begin{small}

\begin{table}[h]
\caption{Curves which have twists with height $\leq 4$ and reduced curves. }
\begin{center}
\begin{tabular}{|c|c|c|c|}
\hline
%                           &                   &         \\
\hline
case \# & $(s_1, s_2)$-curve      &   reduced curve    & Group \\
\hline 
57 & (1, 0, -33, 0, -33, 0, 1)      &   ( -2, 0, 3, 0, 3, 0,  -2)          		& [8, 3] \\
50 &  ( 1 ,   0, 23, 0, -25, 0, 1 )	&    ( 0 ,   3, 1, -6, 1, 3, 0)       & [4, 2] \\
21 &   ( 1 ,   0, 39, 0, 23, 0, 1 ) 	&    ( 2, 1, -1, -2, -1, 1, 2  )  	& [4, 2]  \\
16 &  ( 1 ,   0, -5, 0, -13, 0, 1 ) 	&    ( -1, -1, 3, 2, 3, -1, -1 )  			& [4, 2]  \\
% &   ( 1 ,   0, 3, 0, -5, 0, 1  )  	&    (0, -1, 2, ,2, 2, -1, 0  )  				& [10, 1]  \\
20 &   ( 1 ,   0, 47, 0, 47, 0, 1 ) 	&    ( 3, 0, -2, 0, -2, 0, 3 )  		& [8, 3]  \\
39 &   ( 1 ,   0, 11, 0, 3, 0, 1 ) 	&    ( 1 ,   -1, 1, 2, 1, -1, 1  )  			& [4, 2]  \\
3 &   ( 1 ,   0, 15, 0, -17, 0, 1 ) 	&    ( 0, 2, 1, -4, 1, 2, 0 )  		& [4, 2]  \\
47 & ( 1 ,   0, 7, 0, -25, 0, 1 ) 	&    ( 1, -4, 3, 8, 3, -4, -1  )  		& [4, 2]  \\
40 & ( 1 ,   0, -9, 0, -25, 0, 1 ) 	&    ( -1, -1, 2, 2, 2, -1, -1 )  			& [4, 2]  \\
24 &  ( 1 ,   0, 19, 0, 11, 0, 1 ) 	&    ( 2, 1, 0, -2, 0, 1, 2  )  		& [4, 2]  \\
9 &  ( 1 ,   0, 7, 0, -9, 0, 1 ) 	&    ( 0, -1, 1, 2, 1, -1, 0 )  			& [4, 2]  \\
37 &  ( 1 ,   0, 23, 0, 23, 0, 1 ) 	&    ( 3, 0, -1, 0, -1, 0, 3 )  		& [8, 3]  \\
43 &  ( 1 ,   0, 23, 0, 7, 0, 1 ) 	&    ( 1 ,   -1, 0, 2, 0, -1, 1  )  			& [4, 2]  \\
31 &  ( 1 ,   0, 55, 0, 39, 0, 1 ) 	&    ( 3, 1, -2, -2, -2, 1, 3  )  & [4, 2]  \\
6 &  ( 1 ,   0, -25, 0, -41, 0, 1 )  &    ( -2, 1, 3, -2, 3, 1, -2  )  	& [4, 2]  \\
45 &  ( 1 ,   0, 31, 0, 31, 0, 1 ) 	&    (2, 0, -1, 0, -1, 0, 2 )  		& [8, 3]  \\
52 &  ( 1 ,   0, 27, 0, 19, 0, 1 ) 	&    ( 3, 1, -1, -2, -2, 1, 3 )  & [4, 2]  \\
14 &  ( 1 ,   0, -1, 0, -17, 0, 1 ) 	&    ( -1, 2, 3, -4, 3, 2, -1 )  			& [4, 2]  \\
51 &  ( 1 ,   0, 15, 0, -33, 0, 1 ) 	&    (-1, -6, 3, 12, 3, -6, 1  )  		& [4, 2]  \\
\hline 
\end{tabular}
\end{center}
\label{table-3}
\end{table}%

\end{small}

It is worth noting that in each case the reduction algorithm does find a curve with minimal absolute height.  It is also interesting to see that all 57 curves from Table~\ref{table-2} have one thing in common, their discriminant can be further reduced as explained in Section~(3.4).

%An immediate consequence of the above theorem is the following. 

%\begin{cor}
%Let ${\mathfrak p} \in \mathcal M_2 (\Q)$ be such that $\Aut({\mathfrak p}) \iso D_4$. There is a genus 2 curve  $\mathcal X$ corresponding to $\mathfrak p$ with equation 
%
%$y^2\, z^4 =f(x^2, z^2)$,  where 
%
%\[ f(x, z)= x^6 + a x^4 z^2 + b x^2 z^4 + z^6. \]
%
%If $f \in \Z [x, z] $ and $\Delta_{f (x, z) }$ is minimal over $\Z$, then $\mathcal X$ is reduced in its orbit. 
%\end{cor}

%We do expect that if a genus 2 curve $\mathcal X$ is expressed as in Theorem~3 and the discriminant $\Delta_f $ is minimal, then $\mathcal X$ has minimal absolute height. 
We  believe that a generalization of Thm.~\ref{aut_V4} to  higher degree binary forms $f(x^2, y^2)$  and in more general for   forms  $f(x^n, y^n)$   is possible. Hopefully, this will be the focus of investigation of another paper. 

\bigskip

\noindent \textbf{Acknowledgments:} I would like to thank M. Stoll,  T. Shaska, and J. Sijsling for valuable comments and help.  Furthermore, I would like to thank the anonymous referees for all the comments they provided during the review. \\

%**********************************************
%\nocite{*}
%\bibliographystyle{unsrt} 

\bibliographystyle{amsplain}

\bibliography{ref}{}

\end{document}